\def\N{\mathbb N}
\def\Z{\mathbb Z}
\def\R{\mathbb R}
\def\Q{\mathbb Q}
\def\C{\mathbb C}
\def\O{\mathcal{O}}
\def\Mm{\mathcal{M}}
\def\pfz{\begin{proof}}
\def\pfk{\end{proof}}
	\tikzset{every state/.style={inner sep=0pt,minimum size=28pt}}
\definecolor{darkgreen}{rgb}{0,0.5,0} 
\providecommand{\abs}[1]{\left\lvert{#1}\right\rvert}
\newcommand{\floor}[1]{\left\lfloor{#1}\right\rfloor}
\renewcommand{\O}{\mathcal{O}}
\newcommand{\Qbar}{\overline{\mathbb{Q}}}
\newcommand{\CFF}{{\textup{(CFF)}}}
\newcommand{\CFP}{{\textup{(CFP)}}}
\DeclareMathOperator{\Gal}{Gal}
\newtheorem{thm}{Theorem}[section]
\newtheorem{prop}[thm]{Proposition}
\newtheorem{coro}[thm]{Corollary}
\newtheorem{lem}[thm]{Lemma}
\newtheorem{de}[thm]{Definition}
\newtheorem{remark}[thm]{Remark}
\newtheorem{conj}[thm]{Conjecture}
\newtheorem*{problem*}{Problem}
\title[Continued fractions over quadratic number fields]{Finiteness and periodicity of continued fractions over quadratic number fields}
\author{Zuzana Mas\'akov\'a}
\address{Department of Mathematics, FNSPE, Czech Technical University in Prague, Trojanova 13, 120~00 Praha 2, Czech Republic}
\email{zuzana.masakova@fjfi.cvut.cz}
\author{Tom\'a\v s V\'avra}
\address{Department of Algebra, FMP, Charles University, Sokolovsk\'a 83, 186~75 Praha 8, Czech Republic}
\email{tomas.vavra@mff.cuni.cz}
\author{Francesco Veneziano}
\address{Department of mathematics, University of Genova, Via Dodecaneso 35, 16146 Ge\-no\-va, Italy}
\email{veneziano@dima.unige.it}
\date{\today}
\subjclass[2010]{11A55, 11J70, 11A63}
\keywords{Contined fraction, Perron number, Quadratic Pisot numbers, finiteness}
\begin{document}

\begin{abstract}
We consider continued fractions with partial quotients in the ring of integers of a quadratic number field $K$ and we prove a generalization to such continued fractions of the classical theorem of Lagrange.
A particular example of these continued fractions is the $\beta$-continued fraction introduced by Bernat. As a corollary of our theorem we show that for any quadratic Perron number $\beta$, the $\beta$-continued fraction expansion of elements in $\Q(\beta)$ is either finite of eventually periodic. The same holds for $\beta$ being a square root of an integer.
We also show that for certain 4 quadratic Perron numbers $\beta$, the $\beta$-continued fraction represents finitely all elements of the quadratic field $\Q(\beta)$, thus answering questions of Rosen and Bernat. Based on the validity of a conjecture of Mercat, these are all quadratic Perron numbers with this feature.
\end{abstract}

\maketitle
\allowdisplaybreaks

\section{Introduction}

In 1977, Rosen~\cite{rosen-amm} stated the following research problem: ``Is it possible to devise a continued fraction that represents uniquely all real numbers, so that the finite continued fractions represent the elements of an algebraic number field, and conversely, every element of the number field is represented by a finite continued fraction?" The classical regular continued fraction has this property with respect to the field of rational numbers.

For $\lambda=2\cos\frac{\pi}{q}$ with $q\geq 3$ odd, Rosen gives a definition of $\lambda$-continued fractions, whose partial quotients  are integral multiples of $\lambda$.
Rosen shows, as a consequence of his own work \cite{rosen}, that if $q=5$ (i.e.\ $\lambda=\varphi=\frac12(1+\sqrt{5})$ the golden ratio) the $\lambda$-continued fraction satisfies his desired property.

A rather different construction was presented by Bernat~\cite{bernat}. Here he defines $\varphi$-continued fractions whose partial quotients belong to the set of the so-called $\varphi$-integers, i.e.\ numbers whose greedy expansion in base $\varphi$ uses only non-negative powers of the base. Bernat shows that his $\varphi$-continued fractions also represent every element of $\Q(\sqrt{5})$ finitely.
His proof is established using a very detailed and tedious analysis of the behaviour of $\varphi$-integers under arithmetic operations. This approach depends crucially on the arithmetic properties of $\varphi$-integers, descending from the fact that $\varphi$ is a quadratic Pisot number.

It is stated in~\cite{bernat} as an open question, whether the analogously-defined continued fraction expansion based on the $\beta$-integers would provide finite representation of $\Q(\beta)$ for any other choice of a quadratic Pisot number $\beta$. When trying to adapt Bernat's proof to other values of $\beta$, already in the case of the next smallest quadratic Pisot number $\beta=1+\sqrt{2}$ the necessary analysis becomes even more technical, preventing one from proving the finiteness of the expansions.
So far, it was not even known whether $\beta$-continued fractions provide at least an eventually periodic representation of all elements of $\Q(\beta)$.

\medskip

In this paper we have taken a different approach, considering more general continued fractions whose partial quotients belong to some discrete subset $M$ of the ring of integers in a real quadratic field $K$. The $\beta$-continued fraction of Bernat is a special case of such $M$-continued fractions when $M$ is chosen to be the set $\Z_\beta$ of $\beta$-integers (see Section~\ref{sec:beta-integers}).
With the aim of answering Bernat's question and classifying all quadratic numbers $\beta$ according to the qualitative behaviour of the $\beta$-continued fraction expansion of elements of $\Q(\beta)$, we introduce the following definitions.

\medskip

Let $\beta>1$ be a real algebraic integer.
\begin{itemize}
  \item[$\CFF$] We say that $\beta$ has the Continued Fraction Finiteness property $\CFF$ if every element of $\Q(\beta)$ has a finite $\beta$-continued fraction expansion.
  \item[$\CFP$] We say that $\beta$ has the Continued Fraction Periodicity property $\CFP$ if every element of $\Q(\beta)$ has a finite or eventually periodic $\beta$-continued fraction expansion.
\end{itemize}

We show that all quadratic Perron numbers and the square roots of positive integers satisfy $\CFP$ (Theorems~\ref{t:quadraticperiodic} and \ref{t:squareroots}). We prove $\CFF$ for four quadratic Perron numbers including the golden ratio $\varphi$ (Theorem~\ref{t:3perron}). Moreover, assuming a conjecture stated by Mercat~\cite{mercat}, we show that these four Perron numbers are the only ones with $\CFF$.

 We also consider the case of non-Perron quadratic $\beta$. In this case, if the algebraic conjugate of $\beta$ is positive we are able to construct a class of elements in $\Q(\beta)$ having aperiodic $\beta$-continued fraction expansion, thus showing that neither $\CFF$ nor $\CFP$ hold (Theorem~\ref{t:nonperronklad}).
To achieve a full characterisation of the numbers for which $\CFF$ nor $\CFP$ hold, it remains to give complete answer when $\beta$ is a non-Perron number with a negative algebraic conjugate.
Computer experiments suggest that $\CFF$ is not satisfied by any such $\beta$, with some of them having $\CFP$ and some not.
In Theorem~\ref{t:negatconjugnotCFF} we are able to construct counterexamples to $\CFF$ for a large class of $\beta$, but even assuming Mercat's conjecture the matter is not fully settled.

\medskip

These theorems on $\beta$-continued fractions are obtained with a wide rage of different techniques ranging from diophantine approximation to algebraic number theory and dynamics.

As our main result, we consider a convergent continued fraction $[a_0,a_1,a_2,\dots]$ with value in a quadratic number field $K$ and  partial quotients in the ring of integers of $K$; denote by $a'_i$ the image of $a_i$ under the (nontrivial) Galois automorphism of $K/\Q$. If each partial quotient satisfies $|a'_i|\leq a_i$ we show that the continued fraction is eventually periodic.
If we assume that all $a_i$ belong to $\Z$, then we recover the statement of the classical Lagrange's theorem.

\medskip

One could also study the properties $\CFF$, $\CFP$ for algebraic integers $\beta$ of degree bigger than 2. The issue is considerably more intricate, and already in the cubic Pisot case we can find instances of different $\beta$'s that seem to have $\CFF$, $\CFP$, and aperiodic $\beta$-continued fraction expansions. A modification of some of our arguments may be possible in the case of higher degree fields, but its application is likely to be highly nontrivial.

Another setting in which the same problem might be stated is that of function fields: a function field analogue of $\beta$-continued fractions has been studied in~\cite{HbKa}.

We also mention, as a different path of inquiry that could lead to the study of $M$-continued fractions for other choices of $M$, the recent work \cite{elkies}. Here the authors studied periodic continued fractions with fixed lengths of preperiod and period and with partial quotients in the ring of $S$-integers of a number field, describing such continued fractions as $S$-integral points on some suitable affine varieties.

\section{Preliminaries}
\subsection{Continued fractions}\label{subsec:CFintro}
Let $(a_i)_{i\geq 0}$ be a sequence of real numbers such that $a_i>0$ for $i\geq 1$, and define two sequences $(p_n)_{n\geq -2}$, $(q_n)_{n\geq -2}$ by the linear second-order recurrences
\begin{equation}\label{eq:pnqn}
\begin{aligned}
p_{n}&=a_np_{n-1}+p_{n-2}, &p_{-1}=1, p_{-2}=0,\\
q_{n}&=a_nq_{n-1}+q_{n-2}, &q_{-1}=0, q_{-2}=1.
\end{aligned}
\end{equation}
This recurrence can be written in matrix form as
$$
\begin{pmatrix}
  p_{n+1} & p_{n} \\
  q_{n+1} & q_n
\end{pmatrix} =
\begin{pmatrix}
  p_{n} & p_{n-1} \\
  q_{n} & q_{n-1}
\end{pmatrix}
\begin{pmatrix}
  a_{n+1} & 1 \\
  1 & 0
\end{pmatrix}, \quad n\geq -1.
$$
Taking determinants, it can be easily shown that
\begin{equation}\label{eq:nesoudelnost}
  p_{n-1}q_n-p_nq_{n-1} = (-1)^n, \quad n\geq -1.
\end{equation}

By induction, one can also show that
$$
[a_0,a_1,\dotsc,a_n]:=a_0+\frac{1}{a_1+\cfrac{1}{\ddots+\cfrac{1}{a_{n-1}+\cfrac{1}{a_n}}}}=\frac{p_n}{q_n}.
$$
Notice that the assumption of positivity of $a_i$ ensures that the expression on the right-hand side is well-defined for every $n$.

\subsubsection{Continuants}The numerator and denominator $p_n$, $q_{n}$ can be expressed in terms of the $a_i$ using the so-called continuants: multivariate polynomials defined by the recurrence
\begin{align*}
K_{-1}=0,\ K_0=1,\ K_n(t_1,\dots,t_n)=t_nK_{n-1}(t_1,\dots,t_{n-1})+K_{n-2}(t_1,\dots,t_{n-2}).
\end{align*}
Then $p_n=K_{n+1}(a_0,\dots,a_n)$ and $q_{n}=K_{n}(a_1,\dots,a_{n})$.
It is clear from the definition that $K_n$ is a polynomial with positive integer coefficients in $t_1,\dotsc,t_{n}$, and that each of the $t_i$ appears in at least one monomial with a non-zero coefficient.

The continuants satisfy a number of useful properties. We will in particular need that
$K_{n}(t_1,\dots,t_n)=K_n(t_n,\dots,t_1)$ and that for all $k,l\geq 1$ we have
\begin{multline}\label{eq:continuant}
K_{k+l}(t_1,\dots,t_{k+l})=
      K_{k}(t_1,\dots,t_k)K_{l}(t_{k+1},\dots,t_{k+l})+ \\ + K_{k-1}(t_1,\dots,t_{k-1})K_{l-1}(t_{k+2},\dots,t_{k+l}).
\end{multline}
We refer the reader to \cite{HarWri}*{Chapter X} for the classical theory of continued fractions and the proofs of the properties which we list in this section. Properties of continuants are given in~\cite{ConcreteMaths}*{Section 6.7}.

\subsubsection{Convergence}For an infinite sequence of $a_i$, $i\geq 1$, the continued fraction $[a_0,a_1,a_2,\dots]$ is defined as the limit
\begin{equation}\label{eq:defcf}
[a_0,a_1,a_2,\dots]:=\lim_{n\to\infty} \frac{p_n}{q_n}\,,
\end{equation}
if the limit exists. The numbers $a_i$ are called partial quotients, and the fractions $\frac{p_n}{q_n}$ the convergents of the continued fraction $[a_0,a_1,a_2,\dots]$.

Under our assumptions (positivity of the $a_i$), the limit~\eqref{eq:defcf} exists if and only if $\lim_{n\to\infty}q_n=+\infty$, and a sufficient condition for it is that $\inf_{i} a_i > 0$. (If $c=\inf_{i} a_i$ then $q_n\gg \left(1+\frac{c}{2}\right)^n$.)

\subsubsection{Approximation properties}Suppose that the continued fraction converges.
For each $i\geq -1$, we have that
\begin{equation}\label{eq:raclom}
\xi=a_0+\cfrac{1}{a_1+\cfrac{1}{\dots\
+\cfrac{1}{a_i+\cfrac1{\xi_{i+1}}}}} = \frac{\xi_{i+1}p_i+p_{i-1}}{\xi_{i+1}q_i+q_{i-1}}\,,
\end{equation}
where the $\xi_i=[a_i,a_{i+1},\dotsc]$ are the so-called complete quotients.

From~\eqref{eq:raclom}, we derive
\begin{equation}\label{eq:rozdil}
\xi - \frac{p_i}{q_i} =
\frac{p_{i-1}q_i-q_{i-1}p_i}{q_i(\xi_{i+1}q_i+q_{i-1})}
= \frac{(-1)^i}{q_i(\xi_{i+1}q_i+q_{i-1})}\,,
\end{equation}
where we used~\eqref{eq:nesoudelnost}.

As $(q_j)_{j\geq 1}$ tends to infinity and the convergents $p_i/q_i$ tend to $\xi$, equation~\eqref{eq:rozdil} allows us to quantify the rate of convergence with the following estimates:
\begin{equation}\label{eq:odhad}
  \abs{\xi - \frac{p_i}{q_i}}
= \frac{1}{q_i(\xi_{i+1}q_i+q_{i-1})}\leq \frac{1}{q_i(a_{i+1}q_i+q_{i-1})}=\frac{1}{q_{i}q_{i+1}}\leq\frac{1}{a_{i+1} q_{i}^2}\,.
\end{equation}
If $\inf_i a_i\geq 1$, then $\xi_{i+1}\leq a_{i+1}+1$, and we also have the lower estimate
\begin{equation*}
 \abs{\xi-\frac{p_i}{q_i}}\geq\frac{1}{q_i q_{i+2}}.
\end{equation*}

\subsubsection{Uniqueness of the expansion}When the partial quotients of a continued fraction $[a_0,a_1,\dots]$ take values in the integers and $a_i\geq 1$ for $i\geq 1$, then the so-called regular continued fractions
represent uniquely any real number $\xi=\lim_{n\to\infty} \frac{p_n}{q_n}$, except for the ambiguity $[a_0,\dots,a_n]=[a_0,\dots,a_n-1,1]$ in the finite continued fractions representing rational numbers.

Something similar can be shown in a slightly more general setup.
\begin{lem}\label{l:unique}
 Let $m>0$ and $A\subseteq \R$ such that
 \begin{itemize}
  \item $a\geq m\quad \forall a\in A$;
  \item $\abs{a-b}\geq 1/m\quad \forall a,b\in A$ distinct.
 \end{itemize}
Then every real number has at most one expression as an infinite continued fraction with partial quotients in $A$.
\end{lem}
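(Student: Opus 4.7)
I would adapt the classical uniqueness argument for regular continued fractions, with the separation $1/m$ playing the role of the unit spacing between integers and $m$ playing the role of the integer lower bound $1$. The central observation to establish first is that for any infinite continued fraction $\xi=[a_0,a_1,\ldots]$ with partial quotients in $A$, the complete quotient $\xi_1=[a_1,a_2,\ldots]$ is \emph{strictly} greater than $m$: indeed $\xi_1=a_1+1/\xi_2$ with $a_1\geq m$ and $\xi_2>0$, where convergence of the tail expansion is guaranteed by the condition $\inf_i a_i\geq m>0$ recalled in Section~\ref{subsec:CFintro}. From this it follows that $\xi=a_0+1/\xi_1$ lies in the open interval $(a_0,a_0+1/m)$, equivalently $a_0\in(\xi-1/m,\xi)$.

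Assuming now for contradiction that $\xi$ admits two distinct expansions $[a_0,a_1,\ldots]$ and $[b_0,b_1,\ldots]$ with partial quotients in $A$, I would apply this observation to both expansions to conclude that $a_0$ and $b_0$ both lie in the open interval $(\xi-1/m,\xi)$ of length $1/m$. Hence $|a_0-b_0|<1/m$, which together with the separation hypothesis forces $a_0=b_0$. The equality $\xi-a_0=\xi-b_0$ then gives $\xi_1=\eta_1$ for the complete quotients of the two expansions, and the same argument applied to $\xi_1=[a_1,a_2,\ldots]=[b_1,b_2,\ldots]$ yields $a_1=b_1$, and so on by induction; thus $a_i=b_i$ for every $i$, contradicting the assumed distinctness.

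The argument is rather direct and I do not anticipate a significant obstacle. The one point that does require care is the strict inequality $\xi_1>m$ (not merely $\xi_1\geq m$), since otherwise the open interval above would have to be replaced by a half-open one and the argument could fail at the endpoint; this strictness relies on the expansion being \emph{infinite}, so that $1/\xi_2>0$ creates a positive gap between $\xi_1$ and $a_1$.
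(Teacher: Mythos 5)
Your proposal is correct and follows essentially the same route as the paper: bound the complete quotient strictly above the first partial quotient to place $a_0$ in the open window $(\xi-1/m,\xi)$, use the $1/m$-separation of $A$ to pin down $a_0$, and then recover $\xi_1$ and induct. Your explicit remark that the strictness $\xi_1>m$ comes from the expansion being infinite is exactly the point implicit in the paper's chain of inequalities, so there is nothing to add.
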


\begin{proof}
 Let $\xi=[a_0,a_1,a_2,\dotsc]$ be a convergent continued fraction with partial quotients in $A$. We have that
 \[a_0<\xi=a_0+\frac{1}{\xi_1}<a_0+\frac{1}{a_1}<a_0+\frac{1}{m},\]
which implies
 \[
  \xi-\frac{1}{m}<a_0<\xi.
 \]
 Thanks to the hypothesis that no two elements of $A$ are less than $1/m$ apart, this identifies uniquely the value of $a_0$, and thus also that of $\xi_1$. The statement now follows by induction.
\end{proof}
\subsubsection{Finiteness and periodicity}It is a well known fact that finite regular continued fractions represent precisely the rational numbers. Lagrange's theorem states that irrational quadratic numbers have an eventually periodic regular continued fraction expansion. Galois proved that a quadratic number $\xi$ has a purely periodic regular continued fraction if and only if $\xi>1$ and its algebraic conjugate $\xi'$ belongs to the interval $(-1,0)$.

\subsection{The Weil height of algebraic numbers}
The Weil height is an important tool that measures, broadly speaking, the arithmetic complexity of algebraic numbers.
It can be described in different ways, but the usual definition involves an infinite product decomposition in local factors.

Let $K$ be a number field, and $\Mm_K$ a set of representatives for the places of $K$ (i.e. equivalence classes of non-trivial absolute values over $K$), suitably normalized in such a way that a product formula $\prod_{v\in\Mm_K}\abs{x}_v=1$ holds for all $x\in K^*$.
Then we can define the (multiplicative) Weil height as
\[
 H(x)=\prod_{v\in\Mm_K}\sup(1,\abs{x}_v),
\]
where it is easily seen that all but finitely many factors of the infinite product are equal to 1.
Thanks to the choice of the normalization, this definition extends to a function $H:\Qbar\to [1,+\infty)$ where $\Qbar$ denotes the algebraic closure of rational numbers. The function $H$ has the following properties:

\begin{prop}\label{prop:heights}
For all non-zero $x,y\in\Qbar$ we have
 \begin{enumerate}[(i)]
 \item $H(x+y)\leq 2 H(x)H(y)$;
 \item $H(xy)\leq H(x)H(y)$;
 \item $H(x^n)=H(x)^{\abs{n}}$ for all $n\in\Z$;
 \item $H(\sigma(x))=H(x)$ for all $\sigma\in\Gal(\Qbar/\Q)$;
 \item $H(x)=1$ if and only if $x$ is a root of unity (Kronecker's theorem);
 \item\label{northcott} for all $B,D>0$ the set $\{\xi\in\Qbar\mid H(\xi)\leq B \text{ and }[\Q(\xi):\Q]\leq D\}$ is finite (Northcott's theorem).
\end{enumerate}
\end{prop}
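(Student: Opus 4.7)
The proposition collects six classical properties of the Weil height, so my plan is to treat them in two blocks: items (i)--(iv) by direct manipulation of the local factors in the defining product, and items (v)--(vi) by reducing to classical results on the minimal polynomial.

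To prove (i)--(iv), I would fix a number field $L$ containing all the elements involved, so that the product defining the height becomes finite and can be handled place by place. For (ii), the inequality $\sup(1,\abs{xy}_v)\leq \sup(1,\abs{x}_v)\sup(1,\abs{y}_v)$ holds at every place; multiplying over $v\in\Mm_L$ gives the result. For (iii) with $n>0$, an analogous local argument yields equality. For $n<0$ it suffices to show $H(x^{-1})=H(x)$, and this follows from the identity $\sup(1,\abs{x^{-1}}_v)=\sup(1,\abs{x}_v)\cdot\abs{x}_v^{-1}$ together with the product formula $\prod_v\abs{x}_v=1$. Property (iv) is just the observation that a Galois automorphism permutes the places of $L$ while preserving the normalization, so the factors of the product defining $H$ are permuted but their product is unchanged.

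The most delicate point is (i): one uses the ultrametric bound $\abs{x+y}_v\leq\max(\abs{x}_v,\abs{y}_v)$ at non-archimedean places, while at archimedean places one only has the triangle inequality, which produces a factor $2$. The normalization chosen so that the product formula holds forces the archimedean exponents to sum to $1$, and this collapses the accumulated archimedean error into the single constant $2$. This bookkeeping of normalizations is really the only non-routine piece of (i)--(iv).

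For (v) and (vi), both are consequences of a bound on the coefficients of the minimal polynomial in terms of $H(x)$ and $[\Q(x):\Q]$, obtained by expressing those coefficients as symmetric functions of the conjugates and using (iv). For (vi) this directly yields that only finitely many minimal polynomials are possible, giving Northcott's theorem. For (v), if $H(x)=1$ then so are all $H(x^n)$ by (iii), and all their conjugates have height $1$ by (iv); Northcott then forces $\{x^n:n\in\N\}$ to be finite, so $x$ is a root of unity, and the converse is immediate. Since both of these arguments are standard, in the final write-up I would be content to cite them, and concentrate the effort on the normalization computation in (i) and on making (iii) with $n<0$ explicit via the product formula.
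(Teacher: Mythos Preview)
Your sketch is correct and follows the standard textbook route. Note, however, that the paper does not give its own proof of this proposition at all: immediately after the statement it simply refers the reader to the first two chapters of \cite{BG} for the proofs. So there is nothing to compare against beyond observing that your outline is precisely the argument one finds in such references, with the only point requiring genuine care being, as you say, the archimedean bookkeeping in (i) and the use of the product formula for $H(x^{-1})=H(x)$ in (iii). Your derivation of Kronecker's theorem (v) from Northcott's theorem (vi) via the finiteness of $\{x^n\}$ is also the usual one; the appeal to (iv) there is harmless but not needed, since all powers $x^n$ already lie in $\Q(x)$ and hence have bounded degree.
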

We refer the reader to the first two chapters of \cite{BG} for a thorough introduction to the theory of heights and the proof of the properties listed above.

\begin{remark}\label{rem:Mahlermeasure}
 Let $\xi\in \Qbar$, and let $d$ be the (positive) leading coefficient of the minimal polynomial of $\xi$ over $\Z$. Then
 \[
  H(\xi)^{[\Q(\xi):\Q]}=d \prod_{\sigma\in\Gal(\Q(\xi)/\Q)}\sup\left(1,\abs{\sigma(\xi)}\right).
 \]
\end{remark}

We will only use heights of numbers in real quadratic fields, so we give the exact normalization of the absolute values in this case.
Let $K$ be a real quadratic number field, and let $\mathfrak{p}$ be a prime ideal of $\O_K$ lying over a prime number $p$. Then we normalize $\abs{\cdot}_\mathfrak{p}$ in such a way that $\abs{p}_\mathfrak{p}=p^{-1}$ if the prime $p$ is inert or ramifies in $\O_K$, and $\abs{p}_\mathfrak{p}=p^{-1/2}$ if the prime $p$ splits. The two non-archimedean absolute values are normalized as follows: $\abs{x}_+=\abs{x}^{1/2}$ and $\abs{x}_-=\abs{x'}^{1/2}$, where by $x'$ we denote the algebraic conjugate of $x$ in $K$.

\section{Continued Fractions over Quadratic Fields.}\label{sec:MCF_in_quadratic}

In this section we focus on continued fractions with positive partial quotients in the ring of integers of a real quadratic field.
Let $K$ be a real quadratic field and let ${\mathcal O}_K$ be its ring of integers. For an element $x\in K$ we denote by $x'$ its image under the unique non-trivial automorphism of $K$. The main result of the paper is the following theorem.

\begin{thm}\label{t:superteorem}
Let $K$ be a real quadratic field. Let $\xi=[a_0,a_1,\dotsc]$ be an infinite continued fraction with $a_n\in{\mathcal{O}_K}$ such that $a_n\geq 1$ and $|a'_n|\leq a_n$ for $n\geq 0$. Assume that $\xi\in K$. Then the sequence $(a_n)_{n\geq 0}$ is eventually periodic with all partial quotients in the period belonging to $\Z$, or all belonging to $\sqrt{D}\Z$.

Moreover, there is an effective constant $C_\xi$ estimating the length of the preperiod.
\end{thm}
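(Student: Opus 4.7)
My plan is to mimic the classical Lagrange proof, with the Weil height playing the role of ``bounded integer coefficients of the quadratic satisfied by~$\xi_n$''. Write $\xi=a/b$ with $a,b\in\mathcal{O}_K$, $b>0$, and introduce the auxiliary sequence $\mu_n:=q_na-p_nb\in\mathcal{O}_K$; since the continued fraction is infinite, $\mu_n\ne 0$ for every $n$. From $\xi=(p_n\xi_{n+1}+p_{n-1})/(q_n\xi_{n+1}+q_{n-1})$ one reads off $\xi_{n+1}=-\mu_{n-1}/\mu_n$. Identity~\eqref{eq:rozdil} gives $|\mu_n|=|b|/(\xi_{n+1}q_n+q_{n-1})\le|b|/q_n$, while the rewriting $\xi_nq_{n-1}+q_{n-2}=q_n+q_{n-1}/\xi_{n+1}\ge q_n$ also yields $|\mu_{n-1}|\le|b|/q_n$. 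On the Galois-conjugate side the hypothesis $|a_i'|\le a_i$ together with the positivity of continuant coefficients gives $|p_n'|\le p_n$ and $|q_n'|\le q_n$; combined with the standard $p_n\le(\xi+1)q_n$ for large~$n$, this yields $|\mu_n'|\le C q_n$ for an explicit constant $C=C(\xi)$.

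A short computation using Proposition~\ref{prop:heights} and Remark~\ref{rem:Mahlermeasure} shows that $H(\gamma/\delta)^2\le\max(|\gamma|,|\delta|)\max(|\gamma'|,|\delta'|)$ for any nonzero $\gamma,\delta\in\mathcal{O}_K$. Applied to $\xi_{n+1}=-\mu_{n-1}/\mu_n$ together with the estimates above, this gives
\[
H(\xi_{n+1})^2\le\max(|\mu_{n-1}|,|\mu_n|)\,\max(|\mu_{n-1}'|,|\mu_n'|)\le(|b|/q_n)\cdot Cq_n=C|b|,
\]
a bound depending only on $\xi$. Since each $\xi_n$ has degree $\le 2$ over~$\Q$, Northcott's theorem (Proposition~\ref{prop:heights}(\ref{northcott})) implies that $\{\xi_n:n\ge 0\}$ is finite, with an explicit cardinality bound $C_\xi$. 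By pigeonhole two of the first $C_\xi+1$ complete quotients coincide, and the recurrence $\xi_n=a_n+1/\xi_{n+1}$ propagates the equality, so $(\xi_n)$ and hence $a_n=\xi_n-1/\xi_{n+1}$ are eventually periodic with preperiod~$\le C_\xi$.

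For the $\Z$/$\sqrt{D}\Z$ dichotomy inside the period: once $(a_n)$ is purely periodic of period~$L$, the recurrence $\mu_{n+1}=a_{n+1}\mu_n+\mu_{n-1}$ and its Galois conjugate force $\mu_{n+L}=\rho\mu_n$ for a common $\rho\in K^*$. Combining $|\mu_n|\to 0$ with the norm identity $|\mu_n\mu_n'|=|N(\mu_n)|\in\Z_{>0}$ and the asymptotics $|\mu_n'|\asymp q_n$ pins $N(\mu_n)$ down to an eventually constant value and forces $\rho$ to be a unit of $\mathcal{O}_K$ with $|\rho|<1<|\rho'|$. Pushing this rigidity back through the two embeddings of the recurrence shows that the hypothesis $|a_n'|\le a_n$ must in fact hold with \emph{equality} for every $n$ in the period, so $a_n'=\pm a_n$ and $a_n\in\Z\cup\sqrt{D}\Z$. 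A direct check of the matrix product $M_nM_m$ when one of $a_n,a_m$ lies in $\Z$ and the other in $\sqrt{D}\Z$ shows that the corresponding M\"obius fixed-point equation has no solution in $K$ compatible with our hypotheses, which rules out mixing of the two types inside a single period and yields the stated dichotomy.

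The main obstacle is this last paragraph. The height bound and the Northcott step are a fairly smooth quadratic-field adaptation of the classical Lagrange proof; the delicate part is the rigidity of the period, namely the equalities $|a_n'|=a_n$ and the impossibility of mixing $\Z$-type and $\sqrt{D}\Z$-type partial quotients, which require carefully tracking the interplay between $N(\mu_n)$, the multiplicative shift $\rho$, and the two conjugate recurrences.
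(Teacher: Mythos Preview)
Your height bound is essentially the paper's Lemma~\ref{l:completequobound}: writing $\xi_{n+1}=-\mu_{n-1}/\mu_n$ with $\mu_n\in\O_K$, bounding the archimedean contributions by $|\mu_n|\le|b|/q_n$ and $|\mu_n'|\le Cq_n$, and using integrality to kill the non-archimedean part. That step is fine.

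The first genuine gap is the ``propagation'' sentence. From $\xi_r=\xi_{r+s}$ you cannot conclude $a_r=a_{r+s}$, because the partial quotient is \emph{not} determined by the complete quotient under the bare hypotheses of the theorem: the set $\{a\in\O_K:a\ge 1,\ |a'|\le a\}$ can contain two elements less than~$1$ apart (for instance $\sqrt{5}$ and $1+\varphi$ in $\Q(\sqrt5)$), so two different choices of $a_n$ can lead from the same $\xi_n$ to two different $\xi_{n+1}>1$. The recurrence $\xi_n=a_n+1/\xi_{n+1}$ therefore does not propagate the equality forward, and finiteness of $\{\xi_n\}$ by itself does not force eventual periodicity of the given sequence $(a_n)$. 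The paper resolves this by reversing the order of your last two steps: once some $\xi_r$ recurs, one has $\xi_r=[\overline{a_r,\dots,a_{r+s-1}}]$ as a formal identity, and an algebraic analysis of \emph{that} purely periodic continued fraction (Propositions~\ref{p:evperrac} and~\ref{p:ZorSqrtDZ}) forces the $a_i$ between any two occurrences to lie either all in $\Z$ or all in $\sqrt{D}\Z$. Only then, with partial quotients in a $1$-separated set, does the uniqueness Lemma~\ref{l:unique} apply and give honest periodicity of the original sequence.

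The second gap is the dichotomy itself, which you correctly flag as the obstacle. Your unit $\rho$ with $\mu_{n+L}=\rho\mu_n$, $|\rho|<1<|\rho'|$, is real and can be established as you say, but ``pushing this rigidity back through the two embeddings'' to force $|a_n'|=a_n$ is not a proof, and the $2\times2$ matrix check you propose for non-mixing is too coarse to see the obstruction. The paper's mechanism is quite different and rather specific: from the quadratic fixed-point equation one gets that the discriminant $(p_n+q_{n-1})^2\pm4$ is a square in $K$, and a case analysis of how $2$ factors in $\O_K$ shows $p_n+q_{n-1}\in\Z\cup\sqrt{D}\Z$ (Proposition~\ref{p:evperrac}). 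Since $p_n+q_{n-1}$ is a polynomial with positive integer coefficients in the $a_i$, the equality $|p_n'+q_{n-1}'|=p_n+q_{n-1}$ together with $|a_i'|\le a_i$ forces $|a_i'|=a_i$ termwise; a continuant identity (Proposition~\ref{p:ZorSqrtDZ}) then shows that mixing $\Z$- and $\sqrt{D}\Z$-types would produce a strictly positive irrational part, contradicting $x'=\pm x$. If you want to push your $\rho$-approach through, you would need an analogous positivity/equality argument comparing the growth of $(\mu_n')$ under the conjugate recurrence to the dominant solution of the original recurrence; this is roughly what the Appendix does, but it still does not recover the dichotomy.
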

Notice that if all $a_i$ belong to $\Z$ we recover the statement of the classical Lagrange's theorem.

We start with a lemma controlling the growth of the complete quotients.

\begin{lem}\label{l:completequobound}
Let $K$ be a real quadratic field.
 Let $\xi=[a_0,a_1,\dotsc]$ be an infinite continued fraction with $a_n\in{\mathcal{O}_K}$ such that $a_n>0$ for $n\geq 0$. Assume that $\xi\in K$.
 Then the height of the complete quotients $\xi_{n}$ can be estimated as
 \begin{equation*}
     H(\xi_{n+1})\leq \frac{H(\xi)}{\inf(q_{n},q_{n+1})^{1/2}}\sup\left(|\xi' q'_n-p'_n|,|\xi' q'_{n-1}-p'_{n-1}|\right)^{1/2}.
 \end{equation*}
 If furthermore $a_n\geq 1$ and $\abs{a_n'}\leq a_n$ for all $n\geq 0$, then
 \begin{align*}
     H(\xi_{n})&\leq \sqrt{3}H(\xi),\\
     H(a_{n})&\leq a_n \leq 3 H(\xi)^2,
 \end{align*}
 and therefore only finitely many distinct complete quotients and partial quotients may occur.
\end{lem}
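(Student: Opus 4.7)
The plan is to express the complete quotient as a ratio of two ``error terms'' and compute the Weil height via the product formula.

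From the identity~\eqref{eq:raclom} I would extract
\[
\xi_{n+1}=-\frac{A_{n-1}}{A_n},\qquad\text{where $A_k:=\xi q_k-p_k$,}
\]
and exploit the projective form of the Weil height: for $\alpha,\beta\in K^*$, the product formula $\prod_v|\beta|_v=1$ yields $H(\alpha/\beta)=\prod_{v\in\Mm_K}\max(|\alpha|_v,|\beta|_v)$. I would then estimate $\max(|A_{n-1}|_v,|A_n|_v)$ place by place. At each non-archimedean $v$, the ultrametric inequality together with $|p_k|_v,|q_k|_v\leq 1$ (since $p_k,q_k\in\O_K$) gives $|A_k|_v\leq\max(1,|\xi|_v)$. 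At the archimedean place $+$, the normalization $|A_k|_+=|A_k|^{1/2}$ and the classical bound $|A_k|\leq 1/q_{k+1}$ from~\eqref{eq:odhad} yield $\max(|A_{n-1}|_+,|A_n|_+)\leq\inf(q_n,q_{n+1})^{-1/2}$. At the conjugate place $-$, the factor is by definition $\max(|A'_{n-1}|,|A'_n|)^{1/2}$. Multiplying and using $\prod_{v\text{ non-arch}}\max(1,|\xi|_v)\leq H(\xi)$ gives the first assertion.

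For the sharper second assertion, the hypotheses $a_n\geq 1$ and $|a'_n|\leq a_n$, combined with the recurrences defining $p_n,q_n$ (or equivalently the representations $p_n=K_{n+1}(a_0,\dots,a_n)$, $q_n=K_n(a_1,\dots,a_n)$ and the non-negativity of the continuant coefficients), give by induction $|p'_n|\leq p_n$ and $|q'_n|\leq q_n$. Hence
\[
|A'_n|\leq|\xi'|\,q_n+p_n\leq(|\xi'|+\xi)\,q_n+1,
\]
using $p_n\leq\xi q_n+|A_n|\leq\xi q_n+1$. The key step is to \emph{not} discard the denominator $\max(1,\xi)\max(1,|\xi'|)$ that naturally appears when separating the archimedean and non-archimedean parts of $H(\xi)$: retaining it gives
\[
H(\xi_{n+1})^2\leq\frac{H(\xi)^2}{\max(1,\xi)\max(1,|\xi'|)}\cdot\frac{(|\xi'|+\xi)q_n+1}{q_n},
\]
and the elementary inequality $|\xi'|+\xi+1\leq 3\max(1,\xi)\max(1,|\xi'|)$ (checked by a short case analysis on whether $\xi,|\xi'|$ exceed $1$) collapses the right-hand side to $3H(\xi)^2$, so $H(\xi_n)\leq\sqrt{3}H(\xi)$. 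Then $a_n\leq\xi_n\leq H(\xi_n)^2\leq 3H(\xi)^2$, while $H(a_n)\leq a_n$ follows from $a_n\in\O_K$ and $a_n\geq|a'_n|\geq 0$ (which gives $H(a_n)^2=a_n\max(1,|a'_n|)\leq a_n^2$). The finiteness of the set of values taken by the $\xi_n$ and the $a_n$ is then immediate from Northcott's theorem (Proposition~\ref{prop:heights}\,\eqref{northcott}), since all these numbers lie in the quadratic field $K$.

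The main obstacle is the bookkeeping of archimedean versus non-archimedean contributions: if one estimates $\prod_{v\text{ non-arch}}\max(1,|\xi|_v)$ simply by $H(\xi)$ too early, the bound on $|A'_n|$ introduces an unwanted extra factor of $H(\xi)$ and one ends up with $\sqrt{3}H(\xi)^2$ instead of $\sqrt{3}H(\xi)$. Keeping the precise expression for the non-archimedean product and combining it with the bound on $|A'_n|$ only at the very end is what allows the clean constant to emerge.
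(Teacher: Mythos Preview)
Your proof is correct and follows essentially the same route as the paper: express $\xi_{n+1}=-A_{n-1}/A_n$, compute the height via the product formula as $\prod_v\sup(|A_{n-1}|_v,|A_n|_v)$, bound the non-archimedean factors by $\sup(1,|\xi|_v)$ using integrality of $p_k,q_k$, and handle the two archimedean places separately via \eqref{eq:odhad} and the continuant inequalities $|p_n'|\leq p_n$, $|q_n'|\leq q_n$. Your observation that one must retain the precise non-archimedean product $\prod_{v\nmid\infty}\sup(1,|\xi|_v)=H(\xi)^2/\bigl(\max(1,\xi)\max(1,|\xi'|)\bigr)$ until after applying $|\xi'|+\xi+1\leq 3\max(1,\xi)\max(1,|\xi'|)$ is exactly the mechanism the paper uses to obtain the sharp constant $\sqrt{3}$.
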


\begin{proof}
Denote for simplicity $A_n:=\xi q_n - p_n\in \O_{K}+\xi\O_{K}$. By~\eqref{eq:odhad} we have $\abs{A_n}\leq q_{n+1}^{-1}$. From the ultrametric inequality and the integrality of $p_n,q_n$ it follows that, for all non-archimedean absolute values of $K$,
\begin{equation}\label{eqn:nonarch.abs.val.An}
\abs{A_n}_v\leq\sup\left(\abs{\xi q_n}_v,\abs{p_n}_v\right)\leq \sup\left(\abs{\xi}_v,1\right).
\end{equation}
Due to~\eqref{eq:raclom}, the complete quotients $\xi_n$ can be expressed as
 \begin{equation*}
  \xi_{n+1}=-\frac{A_{n-1}}{A_n}\quad \forall n\geq -1.
 \end{equation*}
 Consider now, for all $n\geq 0$, the following chain of inequalities
 \begin{multline*}
  H(\xi_{n+1})=\prod_{v}\sup\left(\abs{A_{n-1}/A_{n}}_v,1\right)=\prod_{v}\sup\left({\abs{A_{n-1}}_v,\abs{A_{n}}_v}\right)=\\
  =\sup\left({\abs{A_{n-1}},\abs{A_{n}}}\right)^{1/2}\sup\left({\abs{A_{n-1}'},\abs{A_{n}'}}\right)^{1/2}\prod_{v\text{ finite}}\sup\left({\abs{A_{n-1}}_v,\abs{A_{n}}_v}\right)\leq\\
  \leq \inf(q_{n},q_{n+1})^{-1/2}\sup\left({\abs{A_n'},\abs{A_{n-1}'}}\right)^{1/2}H(\xi),
 \end{multline*}
where the second equality follows from the product formula and the last inequality from \eqref{eqn:nonarch.abs.val.An}. This proves the first part of the statement.

Under the condition that $a_n\geq 1$ we know that the sequences of the $p_n$ and $q_n$ are strictly increasing, and under the assumption that $\abs{a_n'}\leq a_n$ we have that $\abs{p_n'}\leq p_n$ and $\abs{q_n'}\leq q_n$. Therefore
\[
 \sup\left({\abs{A_{n-1}},\abs{A_{n}}}\right)\leq \sup\left(q_n^{-1},q_{n+1}^{-1}\right)=q_{n}^{-1},
\]
and
\[\sup\left({\abs{A_{n-1}'},\abs{A_{n}'}}\right)\leq \sup(\abs{\xi'}q_{n-1}+p_{n-1},\abs{\xi'}q_n+p_n)=\abs{\xi'}q_n+p_n.
\]
Hence
\begin{multline*}
 H(\xi_{n+1})=\sup\left({\abs{A_{n-1}},\abs{A_{n}}}\right)^{1/2}\sup\left({\abs{A_{n-1}'},\abs{A_{n}'}}\right)^{1/2}\prod_{v\text{ finite}}\sup\left({\abs{A_{n-1}}_v,\abs{A_{n}}_v}\right)\\
 \leq \left(\abs{\xi'}+\frac{p_n}{q_n}\right)^\frac{1}{2}\prod_{v\text{ finite}}\sup\left(\abs{\xi}_v,1\right)\leq (\abs{\xi'}+\abs{\xi}+1)^{1/2}\prod_{v\text{ finite}}\sup\left(\abs{\xi}_v,1\right)\\
 \leq \sqrt{3} \sup(\abs{\xi'},1)^{1/2}\sup(\abs{\xi},1)^{1/2}\prod_{v\text{ finite}}\sup\left(\abs{\xi}_v,1\right)=\sqrt{3}H(\xi),
\end{multline*}
where we used the elementary inequality $1+a+b\leq 3 \sup(1,a)\sup(1,b)$ for all $a,b\geq 0$.

Finally,
\[
H(a_n)=\sup\left(1,\abs{a_n}\right)^{1/2} \sup\left(1,\abs{a_n'}\right)^{1/2}\leq a_n < \xi_n \leq H(\xi_n)^2.
\]

Since the heights of the partial and complete quotients are bounded, Northcott's theorem (item~\eqref{northcott} of Proposition~\ref{prop:heights}) implies that they can take only finitely many values.
\end{proof}

The following proposition concerns the elements of the field $K$ which can be expressed as a purely periodic continued fraction. By an algebraic argument we show that, in order for the value of the continued fraction to lie in $K$, the numbers $p_n,q_n$ need to satisfy a certain arithmetic condition.

\begin{prop}\label{p:evperrac}
Let $K=\Q(\sqrt{D})$ be a real quadratic field.
 Let $\xi=[\overline{a_0,a_1,\dots, a_n}]$ be a purely periodic continued fraction with $a_i\in{\mathcal{O}_K}$. Assume that $\xi\in K$.
 Then $p_n+q_{n-1}\in\Z\cup\sqrt{D}\Z$.
\end{prop}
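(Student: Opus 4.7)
The plan is to exploit the matrix picture behind the continued fraction and to view
\[M := \begin{pmatrix} p_n & p_{n-1} \\ q_n & q_{n-1}\end{pmatrix}\]
as an element of $\mathrm{GL}_2(\O_K)$. Pure periodicity combined with formula~\eqref{eq:raclom} (evaluated at index $n$, with $\xi_{n+1}=\xi$) yields
\[\xi = \frac{p_n\xi + p_{n-1}}{q_n\xi+q_{n-1}},\]
which means that $\begin{pmatrix}\xi\\1\end{pmatrix}$ is an eigenvector of $M$ with eigenvalue $\lambda := q_n\xi + q_{n-1}\in K$. By~\eqref{eq:nesoudelnost}, $\det M = (-1)^{n+1}$, so the characteristic polynomial of $M$ is
\[X^2 - (p_n+q_{n-1})X + (-1)^{n+1},\]
a monic polynomial with coefficients in $\O_K$. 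Both of its roots $\lambda,\lambda_2$ are therefore integral over $\O_K$; both lie in $K$ (since $\lambda_2 = (-1)^{n+1}/\lambda$); and hence both belong to $\O_K$. Finally, $\lambda\lambda_2 = \pm 1$ shows that $\lambda$ and $\lambda_2$ are in fact \emph{units} of $\O_K$.

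The crucial observation is now at hand: every unit of $\O_K$ has norm $\pm 1$, so $\lambda\lambda' = \pm 1$ as well. Comparing this with $\lambda\lambda_2 = \pm 1$ forces $\lambda_2/\lambda' \in \{\pm 1\}$, that is, $\lambda_2 = \pm\lambda'$. Consequently
\[p_n+q_{n-1} \;=\; \operatorname{tr} M \;=\; \lambda + \lambda_2 \;=\; \lambda \pm \lambda'.\]
If the sign is $+$, the right-hand side equals $\operatorname{Tr}_{K/\Q}(\lambda)\in\Z$; if the sign is $-$, the expression is Galois-antiinvariant, so it lies in $\sqrt{D}\,\Q$, and being an algebraic integer it must belong to $\sqrt{D}\,\Z$. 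Either way, $p_n+q_{n-1}\in\Z\cup\sqrt{D}\,\Z$.

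I expect the main obstacle to be the conceptual step of recognising $\lambda$ as a unit of $\O_K$; once $M$ is viewed as a matrix in $\mathrm{GL}_2(\O_K)$ and its characteristic polynomial is written down, the rest is essentially immediate. The more naïve route of directly comparing the relation $q_n\xi^2 + (q_{n-1}-p_n)\xi - p_{n-1}=0$ with its Galois conjugate and eliminating by means of the minimal polynomial of $\xi$ over $\Q$ tends to fragment into cases according to whether or not the two fixed points of $M$ are Galois conjugate, without providing any additional insight.
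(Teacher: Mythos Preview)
Your argument is correct and is genuinely cleaner than the paper's. The paper derives the same quadratic $q_n\xi^2+(q_{n-1}-p_n)\xi-p_{n-1}=0$, writes its discriminant as $(p_n+q_{n-1})^2+4(-1)^n=y^2$ for some $y\in K$, and then factors $4(-1)^n=(y+x)(y-x)$ with $x=p_n+q_{n-1}$. To show $x'=\pm x$ it performs a case split on whether the prime $2$ is inert, ramified, or split in $\O_K$, in each case arguing that $\tfrac{y\pm x}{2}$ (or suitable substitutes) are units and then comparing with the Galois conjugate. Your route sidesteps this entirely: by identifying the eigenvalue concretely as $\lambda=q_n\xi+q_{n-1}$, you get $\lambda\in K$ for free from the hypothesis $\xi\in K$, and integrality of $\lambda$ is immediate from the characteristic polynomial of $M\in\mathrm{GL}_2(\O_K)$. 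In effect the paper's $\tfrac{x\pm y}{2}$ are your $\lambda,\lambda_2$ in disguise, but your presentation makes their membership in $\O_K^\times$ transparent without any local analysis at $2$. The trade-off is minimal: the paper's approach stays closer to explicit discriminant manipulations, while yours requires recognising the Möbius fixed-point relation as an eigenvector equation---which, as you note, is the only conceptual step.

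One small point worth making explicit in your write-up: $\lambda\neq 0$ because the constant term of the characteristic polynomial is $(-1)^{n+1}\neq 0$, so the formula $\lambda_2=(-1)^{n+1}/\lambda$ is legitimate.
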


\begin{proof}
  Assume  that  $\xi=[\overline{a_0,\dotsc,a_n}]$ is a purely periodic continued fraction with partial quotients in ${\mathcal O}_K$ representing an element in $K$. Then by~\eqref{eq:raclom}, $\xi$ satisfies a quadratic equation over $K$, namely
 \[
  q_n \xi^2+(q_{n-1}-p_n)\xi-p_{n-1}=0.
 \]
The discriminant of this equation,
\[\Delta=(q_{n-1}-p_n)^2+4 q_n p_{n-1}=(q_{n-1}+p_n)^2+4(-1)^n=y^2,\]
must be a square of an element $y$ of $K$, because $\xi\in K$.

Let us write $x=p_n+q_{n-1}$, so we have
\begin{equation}\label{eq:disc.fatt}
 4(-1)^n =(y+x)(y-x),
\end{equation}
where $x,y\in\O_K$.

Let us show that this implies that $x'=\pm x$.

If the prime $2$ remains inert in $K$, then 2 must divide $y+x$ or $y-x$ by definition of a prime ideal. Then it must divide the other, because it divides their sum. So we have
\[
 (-1)^n=\frac{y+x}{2}\frac{y-x}{2}
\]
and both $\frac{y+x}{2},\frac{y-x}{2}$ are units in $\O_K$. Let us write $u=\frac{y+x}{2}$. Then $\frac{y-x}{2}=(-1)^n u^{-1}=(-1)^n \epsilon u'$, where $\epsilon=N_{K/\Q}(u)=\pm 1$. So $x=u-(-1)^n \epsilon u'$, which means that $x'=\pm x$.

If instead the prime $2$ ramifies as $2\O_K=\mathfrak{p}^2$, then as ideals $ \mathfrak{p}^4=(y+x)(y-x)$, so at least one of $y+x$ or $y-x$ must be divisible by $\mathfrak{p}^2=(2)$ and the argument proceeds as in the previous case.

If the prime $2$ splits as $2\O_K=\mathfrak{p}\mathfrak{p'}$, then as ideals $ \mathfrak{p}^2\mathfrak{p'}^2=(y+x)(y-x)$, so either one of $y+x$ or $y-x$ is divisible by $(2)$, and we argue again as above, or (without loss of generality) $\mathfrak{p}^2=(y+x)$ and $\mathfrak{p'}^2=(y-x)$ as ideals.
This means that there is a unit $u$ such that $y-x=u(y'+x')$. Substituting back in \eqref{eq:disc.fatt} we obtain $\pm 4=u N_{K/\Q}(x+y)$ which implies that $u\in\Q$, and so $u=\pm 1$. Now $y-uy'=x+ux'$ with $u=\pm 1$ implies that $x'=\pm x$. But clearly the elements of $\Z\cup\sqrt{D}\Z$ are the only numbers in $\mathcal{O}_K$ satisfying $x'=\pm x$.
\end{proof}

\begin{prop}\label{p:ZorSqrtDZ}
  Let $K=\Q(\sqrt{D})$ be a real quadratic field. Let $\xi=[\overline{a_0,\dotsc,a_n}]$ be a purely periodic continued fraction with $a_k\in \O_K$ and $a_k\geq 1$ for all $k\geq 0$. Assume that $\xi\in K$ and that  $\abs{a_k'}\leq {a_k}$ for  $k=0\dotsc,n$.

  Then either $a_k\in\Z$ for all $k=0,\dotsc,n$, or $a_k\in\sqrt{D}\Z$ for all $k=0,\dotsc,n$.
\end{prop}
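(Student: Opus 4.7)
The plan is to leverage Proposition~\ref{p:evperrac}, which says that $x := p_n + q_{n-1}$ lies in $\Z \cup \sqrt{D}\Z$, together with the fact that $p_n = K_{n+1}(a_0, \ldots, a_n)$ and $q_{n-1} = K_{n-1}(a_1, \ldots, a_{n-1})$ are values of continuants---polynomials with strictly positive integer coefficients in which every variable appears. Because $\abs{a_k'} \leq a_k$ and $a_k > 0$, bounding each monomial term-by-term yields $\abs{p_n'} \leq p_n$ and $\abs{q_{n-1}'} \leq q_{n-1}$. In the case $x \in \Z$ (so $x' = x$) this forces $p_n' = p_n$ and $q_{n-1}' = q_{n-1}$; in the case $x \in \sqrt{D}\Z$ (so $x' = -x$) it forces $p_n' = -p_n$ and $q_{n-1}' = -q_{n-1}$.

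In the first case I would expand the equality $K_{n+1}(a_0', \ldots, a_n') = K_{n+1}(a_0, \ldots, a_n)$ as a sum of monomials: writing $K_{n+1}(t_0, \ldots, t_n) = \sum_{S} c_S \prod_{k \in S} t_k$ with $c_S > 0$, each summand $c_S \bigl(\prod_{k \in S} a_k - \prod_{k \in S} a_k'\bigr)$ is non-negative and the total vanishes, so each must vanish separately. The full-product monomial ($S = \{0, \ldots, n\}$) gives $\prod_k a_k' = \prod_k a_k$, so $\abs{a_k'} = a_k$ and $a_k' = \epsilon_k a_k$ with $\epsilon_k \in \{\pm 1\}$ and $\prod_k \epsilon_k = 1$. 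The monomials obtained by dropping a single adjacent pair---that is, $S = \{0, \ldots, n\} \setminus \{k, k+1\}$ for $k \in \{0, \ldots, n-1\}$, always present in $K_{n+1}$ by the classical combinatorial description of continuants---then give $\prod_{j \neq k, k+1} \epsilon_j = 1$, hence $\epsilon_k \epsilon_{k+1} = 1$ for every such $k$, so all $\epsilon_k$ coincide. This yields either all $a_k \in \Z$ or all $a_k \in \sqrt{D}\Z$.

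In the second case, the same expansion produces $\prod_{k \in S} a_k' = -\prod_{k \in S} a_k$ for every monomial support $S$ of $K_{n+1}$; the empty support $S = \emptyset$ (the constant monomial) would then give $1 = -1$, so this case can only occur when the constant monomial is absent from $K_{n+1}$, i.e.\ when $n$ is even. The full-product and pair-dropping monomials again force all $\epsilon_k$ to coincide and their product to equal $-1$, so $\epsilon_k = -1$ for every $k$ and therefore $a_k \in \sqrt{D}\Z$ for every $k$. The main delicate point is the combinatorial structure of continuants---that for every $k \in \{0, \ldots, n-1\}$ the monomial obtained by dropping $\{k, k+1\}$ appears in $K_{n+1}$ with positive coefficient---a classical fact that can be verified directly from the recursion~\eqref{eq:continuant}; once this is in hand, the remaining steps simply turn term-by-term non-negative inequalities into equalities via the Galois-imposed constraints.
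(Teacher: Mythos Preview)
Your argument is correct. The opening step---using Proposition~\ref{p:evperrac} together with the positivity of continuant coefficients and the hypothesis $\abs{a_k'}\leq a_k$ to force equality throughout---matches the paper's proof. Where you diverge is in the second step, showing that all $a_k$ lie in the same set. The paper argues by contradiction: if both a rational and a $\sqrt{D}\Z$-partial quotient occur, it picks the first index $k$ with $a_k a_{k+1}\in\sqrt{D}\Z$, expands $p_n$ via the continuant identity~\eqref{eq:continuant} around that index, and checks that $p_n+q_{n-1}$ then lies in $\Z^{+}+\sqrt{D}\Z^{+}$ with both parts strictly positive, contradicting $\abs{x'}=x$. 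You instead invoke Euler's combinatorial description of continuants directly: the equality forced on every monomial, applied to the full product and to each ``drop one adjacent pair'' monomial, yields the chain $\epsilon_k\epsilon_{k+1}=1$ and hence that all signs agree. Your route is a little more transparent and avoids the somewhat heavy continuant expansion; the paper's route stays closer to the recursive identities already set up in Section~\ref{subsec:CFintro}. Both are short once the key observation $\abs{a_k'}=a_k$ is in hand.
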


\begin{proof}
 By Proposition~\ref{p:evperrac} we have that $p_n+q_{n-1}\in\Z\cup\sqrt{D}\Z$, which implies that $p_n+q_{n-1}=\abs{p'_n+q'_{n-1}}$.
 As explained in Section~\ref{subsec:CFintro}, $p_n+q_{n-1}=K_{n+1}(a_0,\dotsc,a_{n})+K_{n-1}(a_1,\dotsc,a_{n-1})$ is a polynomial with positive integer coefficients in the variables $a_0,\dotsc,a_n$; therefore \begin{align*}p_n+q_{n-1}=\abs{p'_n+q'_{n-1}}&\leq K_{n+1}(\abs{a'_0},\dotsc,\abs{a'_{n}})+K_{n-1}(\abs{a'_1},\dotsc,\abs{a'_{n-1}})\leq\\ &\leq K_{n+1}(a_0,\dotsc,a_{n})+K_{n-1}(a_1,\dotsc,a_{n-1})=p_n+q_{n-1}.\end{align*}
 This implies that equalities must hold at every point, so in particular $\abs{a'_k}=a_k$ for all $k$, which shows that each $a_k$ is in $\Z\cup\sqrt{D}\Z$.

Assume now that the period $\overline{a_0,\dots,a_n}$ contains both an $a_i\in\Z$ and an $a_j\in\sqrt{D}\Z$. Take the minimal $k\in\{0,\dots,n\}$ such that $a_ka_{k+1}\in\sqrt{D}\Z$.
Then by \eqref{eq:continuant}
$$
\begin{aligned}[t]
p_n&=K(a_0,\dots,a_k)K(a_{k+1},\dots,a_n)+K(a_0,\dots,a_{k-1})K(a_{k+2},\dots,a_n)=\\
&=\big(a_kK(a_0,\dots,a_{k-1})+K(a_0,\dots,a_{k-2})\big)\big(a_{k+1}K(a_{k+2},\dots,a_{n})+K(a_{k+3},\dots,a_{n})\big)\\
&\hspace*{6cm}+K(a_0,\dots,a_{k-1})K(a_{k+2},\dots,a_n)=\\
&=(a_ka_{k+1}+1)K(a_0,\dots,a_{k-1})K(a_{k+2},\dots,a_n)+K(a_0,\dots,a_{k-2})K(a_{k+3},\dots,a_n) + \\
&+ a_{k}K(a_0,\dots,a_{k-1})K(a_{k+3},\dots,a_n)+a_{k+1}K(a_0,\dots,a_{k-2})K(a_{k+2},\dots,a_n).
\end{aligned}
$$
Note that we have omitted the indices of the continuants since they are clear from the context.
By assumption, we have $a_k a_{k+1}+1\in\Z^++\sqrt{D}\Z^+$.
Since all the partial quotients are positive, we see that $x=p_n+q_{n-1}\in\Z^++\sqrt{D}\Z^+$.
  Then obviously, $\abs{x'}<x$, which is a contradiction.
\end{proof}

\begin{proof}[Proof of Theorem~\ref{t:superteorem}]
We are in the hypotheses of the second part of Lemma~\ref{l:completequobound}, so only finitely many distinct complete quotients occur in the continued fraction $[a_0,a_1,a_2,\dots]$. Note that their number can be effectively bounded in a way that depends only on the height of $\xi$ and not on the number field $K$, because the degree of $K$ over $\Q$ is fixed, see item (\ref{northcott}) of Proposition~\ref{prop:heights}. Necessarily at least one complete quotient  occurs infinitely many times. Consider any two occurrences of the same complete quotient, say $\xi_r=\xi_{r+s}$. Then we have
 \[
 \xi_r=[a_r,a_{r+1},\dotsc]=[a_r,a_{r+1},\dotsc,a_{r+s-1},\xi_{r+s}]=[\overline{a_r,a_{r+1},\dotsc,a_{r+s-1}}].
 \]
By Proposition~\ref{p:ZorSqrtDZ}, we have $a_{r+i}\in\Z$ for all $i=0,\dots,s-1$, or $a_{r+i}\in\sqrt{D}\Z$ for all $i=0,\dots,s-1$.

Since the same consideration can be done for any two occurrences of the complete quotient $\xi_r$, we derive that there exist $k_0$ such that $a_k\in\Z$ for all $k\geq k_0$, or $a_k\in\sqrt{D}\Z$ for all $k\geq k_0$.
In both cases, we have $\xi_r=[a_{k_0},a_{k_0+1},\dots]$, with partial quotients in a discrete set with distances between consecutive elements $\geq 1$. Such a sequence represents the number $\xi$ uniquely, see Lemma~\ref{l:unique}. Therefore the continued fraction of $\xi_r$ is purely periodic, and the continued fraction of $\xi$ is eventually periodic.
\end{proof}

\begin{remark}\label{rem:number.bounded.height}
A possible value for the constant $C_\xi$ of Theorem~\ref{t:superteorem} is given by $\#\{x\in K \mid H(x)\leq \sqrt{3}H(\xi)\}$.
 We remark that for a real quadratic field $K$ the cardinality of the finite set $\{x\in K \mid H(x)\leq B\}$ is asymptotic (for large $B$) to $c_K B^4$, where $c_K$ has an explicit expression in terms of the discriminant, regulator, number of ideal classes, number of roots of unity and Dedekind zeta function of the field $K$ (see \cite{schanuel}*{Corollary}).

A bound which does not depend on the field $K$ is given by
 \[\#\{x\in \Qbar \mid H(x)\leq B\text{ and }[\Q(x):\Q]=2\}\leq \frac{8}{\zeta(3)}B^6+16690B^4\log B\] for $B\geq \sqrt{2}$ (see \cite{slicing}*{Theorem 11.1}).

 We will present in the Appendix a different proof of Theorem~\ref{t:superteorem} in case that $|a'_n|<a_n$ which gives a better estimate for the number of irrational partial quotients.
\end{remark}

\section{\texorpdfstring{$M$}{M}-Continued Fractions}\label{sec:MCF}

In the classical theory of continued fractions the partial quotients are taken to be positive integers (except possibly the first one), and there is a canonical algorithm defined through the Gauss map that attaches to every $\xi\in\R$ a continued fraction converging to $\xi$.
We  define now a different expansion, using, instead of the classical integral part, the $M$-integral part for certain subsets $M$ of the real line.

Let $M=(m_n)_{n\in \Z}$ be an infinite subset of the reals without any accumulation points, enumerated in increasing order.
Assume that $m_0=0$, $m_1\leq 1$ and $m_{n+1}-m_n\leq 1/m_1$ for all $n\in \Z$.

For $\xi\in\R$ we define its $M$-integral part and its $M$-fractional part  as
$$
\lfloor \xi \rfloor _M := \max\{y\in M :y\leq \xi\},\qquad \{\xi\}_M:=\xi-\lfloor \xi\rfloor_M,
$$
respectively.

The $M$-continued fraction expansion of a number $\xi\in\R$ is now defined as a direct generalization of the regular continued fraction expansion obtained if $M=\Z$, as explained below.

\medskip

Let $\xi\in\R$. Set $\xi_0:=\xi$. For $n\geq 0$ define inductively:
\begin{equation}\label{eq:Manxin}
\begin{aligned}
a_n&=\lfloor \xi_n\rfloor_M, &\\
\xi_{n+1}&=\frac{1}{\xi_n-a_n} &\text{if }\xi_n\notin M,\\
p_{n}&=a_np_{n-1}+p_{n-2}, &p_{-1}=1,\ p_{-2}=0,\\
q_{n}&=a_nq_{n-1}+q_{n-2}, &q_{-1}=0,\ q_{-2}=1.
\end{aligned}
\end{equation}

If some $\xi_n\in M$, then the algorithm stops and we say that the $M$-continued fraction $[a_0,\dots,a_n]$ is finite.
Note that conditions on the set $M$ ensure that $a_n>0$ for $n\geq 1$.

\begin{remark}
In the classical case ($M=\Z$), it is possible to prove that any infinite sequence of positive integers is the continued fraction expansion of some real number. For a general set $M$ it might happen that certain sequences of positive partial  quotients in $M$ are not allowed, i.e.\ they never occur in the output of the iteration~\eqref{eq:Manxin} for any $\xi\in\R$. We will see an example later in Lemma~\ref{l:admis}.
\end{remark}

\begin{remark}\label{rem:ovvio}
  It is clear that if the $M$-continued fraction expansion of $\xi$ is finite, then $\xi\in\Q(M)$. If the continued fraction expansion of $\xi$ is eventually periodic, then $\xi$ is quadratic over $\Q(M)$.
\end{remark}

\section{\texorpdfstring{$\beta$}{Beta}-Integers and \texorpdfstring{$\beta$}{beta}-Continued Fractions}\label{sec:beta-integers}
\subsection{\texorpdfstring{$\beta$}{Beta}-integers}
In view of generalizing the result of~\cite{bernat}, one of the interests of the present paper is to describe properties of $M$-continued fraction expansions for a special class of sets $M$, the so-called $\beta$-integers, as defined in~\cite{BuFrGaKr}. Consider a real base $\beta>1$.
Any real $x$ can be expanded in the form $x=\pm \sum_{i=-\infty}^{k}x_i\beta^i$ where the digits $x_i\in\Z$ satisfy $0\leq x_i<\beta$. Under the condition that the inequality
\begin{equation}\label{eq:greedy}
\sum_{i=-\infty}^{j}x_i\beta^i < \beta^{j+1}
\end{equation}
holds for each $j\leq i$, we have that such a representation is unique up to the leading zeroes; this is called the greedy $\beta$-expansion of $x$, see R\'enyi~\cite{Renyi}. For the greedy expansion of $x$ we write
$$
(x)_\beta=x_kx_{k-1}\cdots x_0\bullet x_{-1}x_{-2}\cdots
$$
If $\beta\notin \N$, then not every sequence of digits in $\{k\in\N : 0\leq x<\beta\}$ corresponds to the greedy $\beta$-expansion of a real number.
Admissibility of digit sequences as $\beta$-expansions is described by the so-called Parry's condition~\cite{Parry}.

The $\beta$-expansions respect the natural order of real numbers in the radix ordering. In particular,
if $x=\sum_{i=-\infty}^{k}x_i\beta^i$ and $y=\sum_{i=-\infty}^{l}y_i\beta^i$ with $x_k,y_l\neq 0$ are the $\beta$-expansions of $x$, $y$, respectively, then
$x<y$ if and only if $k<l$, or $k=l$ and $x_kx_{k-1}\cdots$ is lexicographically smaller than $y_ky_{k-1}\cdots$.

A real number $x$ is called a $\beta$-integer if the greedy $\beta$-expansion of its absolute value $|x|$ uses only non-negative powers of the base $\beta$; we denote by $\Z_\beta$ the set of $\beta$-integers and by $\Z_\beta^{+}$ the set of non-negative $\beta$-integers, i.e.
$$
\begin{aligned}
\Z_\beta &=\{x\in\R : (|x|)_\beta=x_kx_{k-1}\cdots x_0\bullet 00\cdots\},\\
\Z_\beta^{+} &=\{x\in\R : (x)_\beta=x_kx_{k-1}\cdots x_0\bullet 000\cdots\}.
\end{aligned}
$$

If the base $\beta$ is in $\Z$, the $\beta$-integers are rational integers, i.e. $\Z_\beta=\Z$. Otherwise, it is an aperiodic set of points that can be ordered into a sequence $(t_j)_{j=-\infty}^\infty$, such that $t_i<t_{i+i}$ for $i\in\Z$ and $t_0=0$.
The smallest positive $\beta$-integers are
$$
1,\ 2,\dotsc, \lfloor\beta\rfloor,\ \beta,\ \dotsc
$$
We can derive the following property of $\beta$-integers.

\begin{lem}\label{l:perronhyp}
 Let $\beta>1$ be an algebraic number and let $\sigma$ be a Galois embedding of $\Q(\beta)$ in $\C$.
 \begin{enumerate}[(i)]
  \item\label{l:perronhyp:i} Assume that $\abs{\sigma(\beta)}<\beta$. Then for any $x\in\Z_\beta^+\setminus\{0,1,\dotsc,\floor{\beta}\}$ we have
   $$
   \frac{\abs{\sigma(x)}}{x}\leq \frac{\floor{\beta}+\abs{\sigma(\beta)}}{\floor{\beta}+\beta} <1.
   $$
   \item\label{l:perronhyp:ii} Assume that $\sigma(\beta)\in\R$ and $\sigma(\beta)>\beta$. Then for any $x\in\Z_\beta^+\setminus\{0,1,\dotsc,\floor{\beta}\}$ we have
      $$
      \frac{\sigma(x)}{x}\geq\frac{\floor{\beta}+\sigma(\beta)}{\floor{\beta}+\beta}>1.
      $$
      Moreover, if $\floor{\sigma(\beta)}>\floor{\beta}$, then for any pair $x,y\in\Z_\beta$, $x\neq y$, we have $|\sigma(x)-\sigma(y)|\geq 1$.

  \item\label{l:perronhyp:twoandhalf} Assume that $c=\frac{\abs{\sigma(\beta)}}{\beta}> 1+2\frac{\floor{\beta}}{\beta-1}$. Then for any $x\in\Z_\beta^+\setminus\{0,1,\dotsc,\floor{\beta}\}$ we have $$\frac{\abs{\sigma(x)}}{x}>\frac{c(\beta-1)-\floor{\beta}}{\beta-1+\floor{\beta}}>1.$$

   \item\label{l:perronhyp:iii} Assume that $\abs{\sigma(\beta)}=\beta$. Then for any $x\in\Z_\beta^+$ we have
   $\abs{\sigma(x)}\leq{x}$.

 \end{enumerate}
 In cases \eqref{l:perronhyp:i},\eqref{l:perronhyp:ii}, and \eqref{l:perronhyp:twoandhalf} we have that $\Z_\beta^+ \cap \Q=\{0,1,\dotsc,\floor{\beta}\}$.
\end{lem}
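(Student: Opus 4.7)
All four statements admit a unified treatment by induction on the length $k$ of the greedy $\beta$-expansion $x = \sum_{i=0}^k x_i\beta^i$ of $x \in \Z_\beta^+$, with digits $x_i \in \{0,\dots,\floor{\beta}\}$ and $x_k \geq 1$. The key structural remark is that $x > \floor{\beta}$ forces $k \geq 1$, and the shift $y := (x - x_0)/\beta = \sum_{i=0}^{k-1} x_{i+1}\beta^i$ is itself a nonnegative $\beta$-integer with $y \geq 1$, because Parry admissibility is preserved under truncation of trailing digits. Furthermore $y \in \{1,\dots,\floor{\beta}\}$ when $k = 1$ (so that $\sigma(y) = y$ since $y \in \Z$), while $y \geq \beta^{k-1} \geq \beta > \floor{\beta}$ when $k \geq 2$, cleanly separating the base case from the inductive step.

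For (i), applying the triangle inequality to the decomposition $x = x_0 + \beta y$ yields $|\sigma(x)| \leq x_0 + |\sigma(\beta)|\,|\sigma(y)|$. After cross-multiplying, the desired bound $|\sigma(x)|/x \leq (\floor{\beta} + |\sigma(\beta)|)/(\floor{\beta} + \beta)$ reduces in both the base case and the inductive case (using the inductive bound on $|\sigma(y)|/y$) to an inequality of the form $(\beta - |\sigma(\beta)|)(M y - x_0) \geq 0$ for some $M \geq \floor{\beta}$, which holds because $y \geq 1$, $x_0 \leq \floor{\beta}$, and $|\sigma(\beta)| < \beta$. Part (ii) is the exact dual: all quantities are positive real, the sign of $\beta - \sigma(\beta)$ reverses, and the same manipulation produces the lower bound. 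Part (iv) is immediate from $|\sigma(x)| \leq \sum x_i |\sigma(\beta)|^i = \sum x_i \beta^i = x$. Part (iii) uses the reverse triangle inequality $|\sigma(x)| \geq |\sigma(\beta)|\,|\sigma(y)| - x_0$ (legitimate because $|\sigma(\beta)|\,|\sigma(y)| > \floor{\beta}$ in both cases, thanks to the hypothesis on $c$); letting $L$ denote the target lower bound, the induction closes via the identities $1+L = (\beta-1)(c+1)/(\beta-1+\floor{\beta})$ and $c - L = \floor{\beta}(c+1)/(\beta-1+\floor{\beta})$, and ultimately reduces, after invoking $(c-1)(\beta-1) > 2\floor{\beta}$, to the trivial inequality $\beta^2 > \beta - 1$.

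The separation statement in (ii) is handled independently by writing $z = x - y = \sum_{i=0}^k c_i\beta^i$ with $c_i \in \{-\floor{\beta},\dots,\floor{\beta}\}$ and $c_k$ the leading nonzero coefficient (so $|c_k|\geq 1$); then
\[
|\sigma(z)| \geq \sigma(\beta)^k - \floor{\beta}\cdot \frac{\sigma(\beta)^k - 1}{\sigma(\beta) - 1} \geq 1,
\]
where the last inequality holds precisely because $\sigma(\beta) - 1 \geq \floor{\beta}$, which is the hypothesis $\floor{\sigma(\beta)} > \floor{\beta}$. The final claim $\Z_\beta^+ \cap \Q = \{0,\dots,\floor{\beta}\}$ in cases (i), (ii), (iii) is then an immediate contradiction argument: any rational $x \in \Z_\beta^+$ with $x > \floor{\beta}$ would force $\sigma(x) = x$ and hence $|\sigma(x)|/x = 1$, violating the strict inequalities just established. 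The main obstacle I anticipate is the constant-chasing in (iii), where the hypothesis on $c$ is finely tuned so that the induction closes; the identities above make this tractable but delicate.
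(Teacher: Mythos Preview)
Your argument is correct and complete. The paper proves each item by a direct chain of inequalities rather than by induction: writing $x=\sum_{i=0}^k x_i\beta^i$, it bounds $|\sigma(x)|$ above or below by replacing each $|\sigma(\beta)|^i$ by $c^i\beta^i$ and then using monotonicity in $c^i$ (e.g.\ $c^i\le c$ for $i\ge1$ when $c<1$) together with monotonicity of the map $t\mapsto(A+ct)/(A+t)$; for~(iii) the leading term $x_k(c\beta)^k$ is isolated and the tail is crudely bounded by a geometric sum $\floor{\beta}\frac{\beta^k-1}{\beta-1}$. Your inductive decomposition $x=x_0+\beta y$ is an equally natural route and has the virtue of being uniform across all four items, at the cost of heavier constant-chasing in~(iii)---where the paper's direct estimate sidesteps the need for your identities on $1+L$ and $c-L$. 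The separation argument in~(ii) and the final rationality claim are handled essentially the same way in both proofs. Your flagged reduction to ``$\beta^2>\beta-1$'' is slightly off (the base case actually reduces to $\beta>\beta-1$), but this is cosmetic and does not affect correctness.
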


\begin{proof}
Let $x=\sum_{i=0}^{k}x_i\beta^i\in\Z_\beta^+\setminus\{0,1,\dotsc,\floor{\beta}\}$, so that $x_0\leq \floor{\beta}$ and $\sum_{i=1}^{k}x_i\beta^i\geq \beta$.
Let us prove the first item \eqref{l:perronhyp:i}.
 Denote $c:= \abs{\sigma(\beta)}/\beta<1$.
 Then for the algebraic conjugate $\sigma(x)$ of $x$ we have
 \begin{multline*}
  \frac{\abs{\sigma(x)}}{x}= \frac{\abs{x_0+\sum_{i=1}^{k}x_i\sigma(\beta)^i}}{x_0+\sum_{i=1}^{k}x_i\beta^i}\leq \frac{x_0+\sum_{i=1}^{k}x_i c^i \beta^i}{x_0+\sum_{i=1}^{k}x_i\beta^i}\leq\\
  \leq \frac{x_0+c\sum_{i=1}^{k}x_i \beta^i}{x_0+\sum_{i=1}^{k}x_i\beta^i}\leq  \frac{\floor{\beta}+c\beta}{\floor{\beta}+\beta}=\frac{\floor{\beta}+\abs{\sigma(\beta)}}{\floor{\beta}+\beta}<1.
 \end{multline*}

 Very similarly, for proving part \eqref{l:perronhyp:ii} denote $c:= \sigma(\beta)/\beta>1$.
 Then for the algebraic conjugate $\sigma(x)$ of $x$ we have
 \begin{multline*}
  \frac{\sigma(x)}{x}= \frac{x_0+\sum_{i=1}^{k}x_i\sigma(\beta)^i}{x_0+\sum_{i=1}^{k}x_i\beta^i}= \frac{x_0+\sum_{i=1}^{k}x_i c^i \beta^i}{x_0+\sum_{i=1}^{k}x_i\beta^i}\geq\\
  \geq  \frac{x_0+c\sum_{i=1}^{k}x_i \beta^i}{x_0+\sum_{i=1}^{k}x_i\beta^i}\geq
  \frac{\floor{\beta}+c\beta}{\floor{\beta}+\beta}=\frac{\floor{\beta}+\sigma(\beta)}{\floor{\beta}+\beta}>1.
 \end{multline*}

Suppose now that $\floor{\sigma(\beta)}=M>m=\floor{\beta}$. We shall estimate the distance $|\sigma(x)-\sigma(y)|$ of two
distinct $\beta$-integers $x=\sum_{i=0}^{k}x_i\beta^i,y=\sum_{i=0}^{l}y_i\beta^i$. If $k=l=0$, obviously $|\sigma(x)-\sigma(y)|\geq 1$. Otherwise, we can without loss of generality assume that $k>l$, $x_k>0$. We have
\begin{equation}\label{eq:vetsinezjedna?}
|\sigma(x)-\sigma(y)| = \Big| \sum_{i=0}^{k}x_i\sigma(\beta)^i - \sum_{i=0}^{l}y_i\sigma(\beta)^i \Big| \geq \sigma(\beta)^k - \sum_{i=0}^{k-1} m\sigma(\beta)^i\,.
\end{equation}
Here we have used that $m=\floor{\beta}$ is the maximal digit allowed in the $\beta$-expansion. Now consider the expansion in base $\sigma(\beta)$. The maximal allowed digit is $M=\floor{\sigma(\beta)}>m$. It can be easily derived from the Parry condition~\cite{Parry} that $\sum_{i=1}^{k-1} m\sigma(\beta)^i+(m+1)$ is an admissible greedy $\sigma(\beta)$-expansion. By definition~\eqref{eq:greedy}, we have
$$
\sum_{i=1}^{k-1} m\sigma(\beta)^i+(m+1) < \sigma(\beta)^k,
$$
which is equivalent to
$$
\sigma(\beta)^k - \sum_{i=0}^{k-1} m\sigma(\beta)^i > 1.
$$
Comparing with~\eqref{eq:vetsinezjedna?} we have the statement of item (ii).

And again for part \eqref{l:perronhyp:twoandhalf} we have
 \begin{multline*}
  \frac{\abs{\sigma(x)}}{x}\geq \frac{x_k (c\beta)^k -\abs{\sum_{i=0}^{k-1}x_i\sigma(\beta)^i}}{x_k\beta^k+\sum_{i=0}^{k-1}x_i\beta^i} \geq \frac{ (c\beta)^k -\abs{\sum_{i=0}^{k-1}x_i\sigma(\beta)^i}}{\beta^k+\sum_{i=0}^{k-1}x_i\beta^i}\geq\\
  \geq \frac{(c\beta)^k -\sum_{i=0}^{k-1}x_i c^i \beta^i}{\beta^k+\sum_{i=0}^{k-1}x_i\beta^i}\geq  c^{k-1} \frac{ c \beta^k-\sum_{i=0}^{k-1}x_i \beta^i}{\beta^k+\sum_{i=0}^{k-1}x_i\beta^i}\geq\\
  \geq\frac{ c \beta^k-\floor{\beta}\frac{\beta^k-1}{\beta-1}}{\beta^k+\floor{\beta}\frac{\beta^k-1}{\beta-1}}>
  \frac{ c -\frac{\floor{\beta}}{\beta-1}}{1+\frac{\floor{\beta}}{\beta-1}}=
  \frac{c(\beta-1)-\floor{\beta}}{\beta-1+\floor{\beta}}>1.
 \end{multline*}
The proof of \eqref{l:perronhyp:iii} is simple realizing that
$$
\abs{\sigma(x)}=\Big|\sum_{i=0}^{k}x_i\sigma(\beta)^i\Big|\leq \sum_{i=0}^{k}x_i\abs{\sigma(\beta)}^i = x\,.
$$

In order to show that in cases (i) and (ii) it holds that $\Z_\beta^+ \cap \Q=\{0,1,\dotsc,\floor{\beta}\}$, realize that in these cases any $x\in\Z_\beta^+\setminus\{0,1,\dotsc,\floor{\beta}\}$ is not fixed by a Galois automorphism, and therefore it cannot be rational.
\end{proof}

\begin{remark}
 Notice that the final assertion of the lemma, namely that $\Z_\beta^+ \cap \Q=\{0,1,\dotsc,\floor{\beta}\}$,  does not hold in general. Obvious counterexamples are given by $\beta$'s which are square roots of rational integers, in which case all even powers of $\beta$ are in $\Z_\beta^+\cap \Q$.
 Less trivial counterexamples can occur for some quadratic $\beta$'s with  $\beta'<-\beta$.
 For instance if $\beta$ is the positive root of $X^2+2X-9=0$, then $\beta^2+2\beta=9\in\Z_\beta^+\cap \Q$.

 More in general, if $\beta$ is the positive root of $X^2+mX-(2m^2+1)$, for $m\geq 2$, then it can be shown that $\floor{\beta}=m$ and that $\beta^2+m\beta=2m^2+1\in\Z_\beta^+\cap \Q$.
\end{remark}

A class of algebraic numbers satisfying the assumptions of Lemma~\ref{l:perronhyp}~\eqref{l:perronhyp:i}, is formed by Perron numbers, i.e.\ algebraic integers $\beta>1$ such that every conjugate $\sigma(\beta)$ of $\beta$ satisfies $\abs{\sigma(\beta)}<\beta$. Perron numbers appear as dominant eigenvalues of primitive integer matrices. A special subclass of Perron numbers are Pisot numbers, i.e.\ algebraic integers $\beta>1$ whose conjugates lie in the interior of the unit disc.
Note that when $\beta$ is chosen to be a Pisot number, then the Galois conjugates of the $\beta$-integers are uniformly bounded and consequently, the $\beta$-integers enjoy many interesting properties, especially from the arithmetical point of view, see e.g.~\cite{FroSo} or~\cite{AkiyamaPisotAndGreedy}.
We also mention that Perron numbers in any given number field form a discrete subset.
\subsection{\texorpdfstring{$\beta$}{Beta}-continued fractions}
When setting $M$ to be the set $\Z_\beta$ of $\beta$-integers, it is clear that any finite $\beta$-continued fraction belongs to the field $\Q(\beta)$. The opposite is however not obvious as we will see in sequel. For that, we define properties \CFF{} and \CFP.

\begin{de}
For a real number $\beta>1$ set $M=\Z_\beta$. The $M$-continued fraction in this case is said to be the $\beta$-continued fraction.
 We say that a real number $\beta>1$ has the Continued Fraction Finiteness property \CFF{} if every element of $\Q(\beta)$ has a finite $\beta$-continued fraction expansion.
 We say that $\beta>1$ has the Continued Fraction Periodicity property \CFP{} if every element of $\Q(\beta)$ has either finite or eventually periodic $\beta$-continued fraction expansion.
\end{de}

Note that properties \CFF{} and \CFP{} could be studied also for other sets $M$.

\begin{remark}
 If $\beta\in\N$, then $\Z_\beta=\Z$ and the $\beta$-continued fraction expansion is the classical regular continued fraction expansion; this implies that integer $\beta$'s satisfy \CFF{}.
\end{remark}

\begin{remark}\label{r:ZCF-betaCF}
 Let $\beta>1$ and $\xi>0$. If the regular continued fraction expansion of $\xi$ only involves partial quotients strictly smaller than $\floor{\beta}$, then it coincides with the $\beta$-continued fraction expansion of $\xi$.
\end{remark}

Based on the above remark, any sequence of integers in $\{1,2,\dots,\floor{\beta}-1\}$ is a $\beta$-continued fraction expansion of a real number $x$. In general, however, not any sequence of $\beta$-integers will occur as some $\beta$-continued fraction expansion. The following statement describes a sequence of partial quotients in $\Z_\beta$ which is not admissible in a $\beta$-continued fraction expansion.

\begin{lem}\label{l:admis}
  Let $\beta>1$ satisfy $\beta-\lfloor\beta\rfloor \leq \beta^{-1}$. Let $a_i,a_{i+1}$ be two consecutive partial quotients in the $\beta$-continued fraction expansion of a real number $\xi$. If $a_i=\lfloor\beta\rfloor$, then
   $a_{i+1}\geq \beta$.
\end{lem}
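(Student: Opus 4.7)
The plan is to follow the definition of the $\beta$-continued fraction algorithm directly, using two facts: (a) the next positive $\beta$-integer after $\lfloor\beta\rfloor$ is $\beta$ itself, and (b) $\beta$ is a $\beta$-integer. Combined with the hypothesis $\beta-\lfloor\beta\rfloor\leq \beta^{-1}$, these force $\xi_{i+1}>\beta$.

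First I would verify the elementary fact that
\[
 \Z_\beta^+\cap[0,\beta]=\{0,1,\dotsc,\lfloor\beta\rfloor\}\cup\{\beta\},
\]
by writing a generic positive $\beta$-integer as $x=\sum_{j=0}^k x_j\beta^j$ with $0\leq x_j\leq\lfloor\beta\rfloor$. If $k=0$ then $x\in\{0,\dotsc,\lfloor\beta\rfloor\}$, and if $k\geq 1$ then $x\geq\beta^k\geq\beta$, with equality only when $k=1$, $x_1=1$, $x_0=0$. Hence no $\beta$-integer lies strictly between $\lfloor\beta\rfloor$ and $\beta$.

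Next I would apply the algorithm~\eqref{eq:Manxin}. Since $a_{i+1}$ exists, the algorithm did not stop at step $i$, so $\xi_i\notin M$ and in particular $\xi_i>a_i=\lfloor\beta\rfloor$. By the definition of the $M$-integer part, $a_i$ is the largest $\beta$-integer $\leq\xi_i$, so by the preceding step we also get $\xi_i<\beta$. Therefore
\[
 0<\xi_i-a_i<\beta-\lfloor\beta\rfloor\leq\beta^{-1},
\]
where the last inequality is the hypothesis on $\beta$. Taking reciprocals,
\[
 \xi_{i+1}=\frac{1}{\xi_i-a_i}>\beta.
\]

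Finally, since $\beta\in\Z_\beta$ and $\xi_{i+1}>\beta$, the largest $\beta$-integer not exceeding $\xi_{i+1}$ is at least $\beta$, giving $a_{i+1}=\lfloor\xi_{i+1}\rfloor_M\geq\beta$ as claimed. No step is really a significant obstacle here — the only item that needs attention is ruling out a $\beta$-integer between $\lfloor\beta\rfloor$ and $\beta$, which is an immediate consequence of the greedy expansion definition.
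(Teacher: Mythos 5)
Your proof is correct and follows essentially the same route as the paper: since the next $\beta$-integer after $\lfloor\beta\rfloor$ is $\beta$ itself, the condition $a_i=\lfloor\beta\rfloor$ forces $\lfloor\beta\rfloor<\xi_i<\beta$, whence $\xi_{i+1}=(\xi_i-\lfloor\beta\rfloor)^{-1}>(\beta-\lfloor\beta\rfloor)^{-1}\geq\beta$ and so $a_{i+1}\geq\beta$. Your extra step spelling out that $\Z_\beta^+\cap[0,\beta]$ contains no element strictly between $\lfloor\beta\rfloor$ and $\beta$ is just an explicit justification of what the paper's one-line proof uses implicitly.
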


\pfz
If $a_i=\lfloor \xi_i\rfloor_{\Z_\beta}=\lfloor\beta\rfloor$, then $\lfloor\beta\rfloor< \xi_i<\beta$. Thus $\xi_{i+1}=(\xi_i-\lfloor\beta\rfloor)^{-1}>(\beta-\lfloor\beta\rfloor)^{-1}= \beta$.
\pfk

In what follows we will study property \CFF{} in quadratic fields.
For quadratic Pisot units $\beta$, the set of all rules for admissibility of strings of partial quotients is given in~\cite{feng}. Let us mention that similar study can be found already in~\cite{Kolar}. In Lemma~\ref{l:admis} we have cited only the rule which will be needed later.

\section{Properties of \texorpdfstring{$\beta$}{Beta}-Continued Fractions for Quadratic Numbers}

We aim to characterize which quadratic integers have properties $\CFF$ and $\CFP$. We will apply the general results of Section~\ref{sec:MCF_in_quadratic} to the set $M=\Z_\beta$ defined above.
In view of Remark~\ref{r:ZCF-betaCF}, it is clear that results on the classical regular continued fractions can have implications on $\beta$-continued fractions for $\beta$ big enough. In this spirit, the following proposition shows that \CFF{} property among quadratic numbers is rather rare.

\begin{prop}\label{p:CFFonlyFinitely}
 For every real quadratic field $K$ there is a positive bound $m_K>1$ such that no irrational number $\beta>m_K$ in $K$ has property \CFF{}.
 In particular in a given real quadratic field $K$ only finitely many Perron numbers can have property \CFF{}.
\end{prop}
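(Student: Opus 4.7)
The plan is to find, for each real quadratic field $K$, a single element $\xi_0 \in K$ whose regular continued fraction expansion is infinite but has bounded partial quotients, and then invoke Remark~\ref{r:ZCF-betaCF} to force the $\beta$-continued fraction of $\xi_0$ to agree with its regular one whenever $\lfloor \beta \rfloor$ exceeds that bound. Concretely, I would write $K = \Q(\sqrt{D})$ and take $\xi_0 := \sqrt{D}$ (any fixed quadratic irrational of $K$ would work equally well). By the classical Lagrange theorem, the regular continued fraction $[a_0, a_1, a_2, \dots]$ of $\xi_0$ is eventually periodic, so there is an integer $A = A(K)$ with $a_i \leq A$ for every $i$. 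Set $m_K := A + 1$.

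For any irrational $\beta \in K$ with $\beta > m_K$ one has $\lfloor \beta \rfloor \geq A+1$, hence $a_i \leq A < \lfloor \beta \rfloor$ for every $i$. By Remark~\ref{r:ZCF-betaCF} the $\beta$-continued fraction expansion of $\xi_0$ then coincides with its regular one, which is infinite. Since $\xi_0 \in \Q(\beta) = K$, this exhibits an element of $\Q(\beta)$ without a finite $\beta$-continued fraction expansion, so $\beta$ fails to have \CFF.

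For the ``in particular'' assertion, I would use the fact recalled at the end of Section~\ref{sec:beta-integers} that the Perron numbers lying in $K$ form a discrete subset of $\R$. Any bounded discrete subset of $\R$ is finite, so only finitely many Perron numbers in $K$ can lie in the bounded interval $(1, m_K]$; by the previous paragraph, no (irrational) Perron number of $K$ exceeding $m_K$ can have \CFF. There is no deep obstacle in this argument: once $\xi_0 \in K$ is chosen, the proposition reduces to a short application of Lagrange's theorem combined with Remark~\ref{r:ZCF-betaCF}. The only mild subtlety is ensuring that $\xi_0$ really generates $K$ over $\Q$ (so that ``element of $\Q(\beta)$'' actually captures $\xi_0$), which is automatic for $\xi_0 = \sqrt{D}$.
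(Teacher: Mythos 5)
Your proposal is correct and follows essentially the same route as the paper: Lagrange's theorem gives an eventually periodic (hence bounded) regular expansion of a fixed quadratic irrational of $K$, and Remark~\ref{r:ZCF-betaCF} then forces the $\beta$-continued fraction to coincide with this infinite expansion once $\lfloor\beta\rfloor$ exceeds the bound. The only cosmetic difference is that the paper applies the remark to the purely periodic tail (bounding only the partial quotients in the period), while you bound all partial quotients of $\xi_0=\sqrt{D}$ and apply it to $\xi_0$ directly, which is equally valid.
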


\begin{proof}
 Let $\xi\in K\setminus\Q$. The regular continued fraction expansion of $\xi$ is eventually periodic. Denote by
 $c$ the maximum of the partial quotients appearing in the periodic part.
  Let $\beta>c+1$ be an element of $\O_K\setminus\Z$. Then $\beta$ does not have \CFF{}, because by Remark~\ref{r:ZCF-betaCF} the complete quotient of $\xi$ defined by the purely periodic tail does not have a finite $\beta$-continued fraction expansion. We can thus take $m_K=c+1$.
\end{proof}

The matter of studying continued fractions with small partial quotients is already very complicated for the classical regular continued fractions.
In \cite{McMullen}, where he studies the issue under the point of view of dynamics and geodesics on arithmetic manifolds, McMullen poses the following problem:

\begin{problem*}[\cite{McMullen}*{p.22}]
Does every real quadratic field contain infinitely many periodic continued
fractions with partial quotients equal to 1 or 2?
\end{problem*}

Mercat conjectures an affirmative answer to a weaker version of this problem:
\begin{conj}[\cite{mercat}*{Conjecture 1.6}]\label{conj:mercat}
 Every real quadratic field contains a periodic continued fraction with partial quotients equal to 1 or 2.
\end{conj}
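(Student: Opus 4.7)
This is Mercat's conjecture, an open problem, so I can only sketch a natural plan of attack and indicate why it breaks down. The starting observation is that a purely periodic continued fraction $[\overline{a_1,\ldots,a_n}]$ with $a_i\in\{1,2\}$ is the attracting fixed point on the real projective line of the hyperbolic matrix $M = M_{a_1}\cdots M_{a_n}$, where
\[
 M_j = \begin{pmatrix} j & 1 \\ 1 & 0 \end{pmatrix}\quad\text{for }j\in\{1,2\}.
\]
The quadratic equation this fixed point satisfies has discriminant $\Delta = \mathrm{tr}(M)^2 - 4(-1)^n$ (by the same computation as in Proposition~\ref{p:evperrac}), so the fixed point generates $\Q(\sqrt{\Delta})$. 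Mercat's conjecture is thus equivalent to the arithmetic statement: for every squarefree $D > 0$, there exists a word $(a_1,\ldots,a_n) \in \{1,2\}^{*}$ whose associated $\Delta$ has squarefree part $D$.

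The natural line of attack is through the theory of thin groups. The semigroup $\mathcal{S}$ generated by $M_1, M_2$ is a thin subsemigroup of $\mathrm{SL}_2(\Z)$ with critical exponent $\delta$ equal to the Hausdorff dimension of the Cantor set $E_2 = \{x \in [0,1] : \text{all regular CF digits of }x\text{ lie in }\{1,2\}\}$, numerically $\delta \approx 0.531$. Results of Bourgain--Kontorovich on the affine sieve show that the set of traces realised by words in $\mathcal{S}$ has positive density in $\Z$, and the plan would be to combine such density statements with a local--global or circle-method analysis to show that the squarefree parts of the associated discriminants cover every positive integer.

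The main obstacle is exactly the gap between ``positive density'' and ``every.'' Since $\delta < 1$, orbital counting bounds sit below the threshold where one can close the local--global gap unconditionally; this is precisely the same obstruction that keeps the strong form of Zaremba's conjecture open. A complementary direction would exploit the explicit bijection between reduced indefinite binary quadratic forms of discriminant $\Delta$ and periodic continued fraction cycles, and attempt, for each squarefree $D$, to engineer some order inside $\Q(\sqrt{D})$ whose cycle of reduced forms involves only $a_i \in \{1,2\}$; the difficulty here is that such small partial quotients are very restrictive and no apparent arithmetic mechanism guarantees the existence of at least one such cycle in every quadratic field. Neither route is known to succeed, which is why the conjecture stands, and this is the reason the present paper can only resolve the $\CFF$ classification conditionally on it.
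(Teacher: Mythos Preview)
Your assessment is correct: the statement in question is labelled as a \emph{conjecture} in the paper, quoted from \cite{mercat}, and the paper makes no attempt to prove it. It is used solely as a conditional hypothesis (see Remark~\ref{rem:mercat} and the corollary to Theorem~\ref{t:3perron}). So there is no ``paper's own proof'' to compare against, and your decision to explain why the problem is open rather than to attempt a proof is exactly right.

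Your sketch of the two natural avenues --- the thin-semigroup/affine-sieve approach via traces of words in $M_1,M_2$, and the reduced-forms/cycle approach --- goes well beyond what the paper says; the paper simply records the conjecture and its consequence that one could take $m_K=3$ in Proposition~\ref{p:CFFonlyFinitely}. One small point worth tightening: the reformulation ``for every squarefree $D>0$ there exists a word whose associated $\Delta$ has squarefree part $D$'' is slightly imprecise, since the relevant condition is that $\Q(\sqrt{\Delta})=\Q(\sqrt{D})$, i.e.\ that $\Delta/D$ be a perfect square in $\Q$; but this does not affect the substance of your discussion.
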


\begin{remark}\label{rem:mercat}
 If $K=\Q(\sqrt{d})$, then standard estimates on the continued fraction expansion of
 $\big\lfloor\sqrt{d}\big\rfloor+\sqrt{d}$ imply that for the constant $m_K$ of Proposition~\ref{p:CFFonlyFinitely}, we can take $m_K = 2\big\lfloor\sqrt{d}\big\rfloor+1$, see~\cite{mercat}*{Proposition 7.11}.

 If Mercat's Conjecture is true, we can take $m_K = 3$ for all quadratic fields $K$.
\end{remark}

\subsection{Pisot numbers and Perron numbers}

Bernat~\cite{bernat} showed that the golden ratio $\varphi=\frac12(1+\sqrt{5})$ has property \CFF{} and asked whether other quadratic Pisot numbers with \CFF{} could be found.
The quadratic Pisot numbers can be characterized as the larger roots of one of the polynomials with integer coefficients
$$
X^2-aX-b,\ a\geq b\geq 1,\qquad X^2-aX+b,\ a\geq b+2\geq 3.
$$
The golden ratio $\varphi$ with minimal polynomial $x^2-x-1$ is the smallest among quadratic Pisot numbers.

We consider the more general quadratic Perron numbers. It is not difficult to see that these are the larger roots of the polynomials
$$
X^2-aX-b,\ a\geq 1,\ \text{ such that } a^2+4b>0,\ \sqrt{a^2+4b}\notin\Q.
$$
In view of Remark~\ref{rem:mercat}, we will focus on quadratic Perron numbers smaller than $3$.


An intermediate step towards establishing property \CFF{} is formulated in the following statement, which is a special case of Theorem~\ref{t:superteorem}.

\begin{thm}\label{t:quadraticperiodic}
Let $\beta$ be a quadratic Perron number. Then the $\beta$-continued fraction of any $\xi\in \Q(\beta)$ contains at most finitely many partial quotients in $\Z_\beta\setminus\Z$ and thus
it is either finite or eventually periodic with all partial quotients in the period being rational integers.

In particular, quadratic Perron numbers satisfy $\CFP$.
\end{thm}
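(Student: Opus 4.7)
The plan is to derive the theorem as a direct corollary of Theorem~\ref{t:superteorem}, by verifying its hypotheses automatically in the quadratic Perron setting and then ruling out the $\sqrt{D}\Z$ alternative appearing in its conclusion.

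First I would observe that, because $\beta$ is a quadratic Perron number, its non-trivial Galois conjugate $\beta'$ is real and satisfies $|\beta'|<\beta$, and every $\beta$-integer lies in $\Z[\beta]\subseteq\O_K$. In particular every partial quotient $a_n$ of the $\beta$-continued fraction expansion of $\xi$ belongs to $\O_K$, and for $n\geq 1$ the $M$-continued fraction algorithm automatically delivers $a_n\geq 1$ (the value $a_0$, which may fail this, can be absorbed by passing to the tail $\xi_1$, since eventual periodicity is insensitive to the initial term). Applying Lemma~\ref{l:perronhyp}\eqref{l:perronhyp:i} with the non-trivial Galois embedding then gives
\[
|a_n'|\leq a_n,
\]
with strict inequality whenever $a_n\notin\{1,\dots,\lfloor\beta\rfloor\}$ (rational elements of $\{1,\dots,\lfloor\beta\rfloor\}$ are Galois-fixed, giving equality).

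With these hypotheses in place, I would invoke Theorem~\ref{t:superteorem}. It produces an eventually periodic expansion whose period consists either entirely of elements of $\Z$ or entirely of elements of $\sqrt{D}\Z$, where $K=\Q(\sqrt{D})$. The $\sqrt{D}\Z$ alternative must be excluded: if some period entry had the form $a_n=c\sqrt{D}$ with $c\in\Z_{>0}$, then $|a_n'|=a_n$ would hold, yet $a_n$ is irrational and so cannot lie in $\{1,\dots,\lfloor\beta\rfloor\}$, contradicting the \emph{strict} bound in Lemma~\ref{l:perronhyp}\eqref{l:perronhyp:i}. Hence the period consists of rational integers, and property $\CFP$ follows.

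The finiteness claim ``at most finitely many partial quotients in $\Z_\beta\setminus\Z$'' is then automatic: for a finite expansion it is trivial, while for an eventually periodic one only the preperiod can contribute entries outside $\Z$, and by Theorem~\ref{t:superteorem} its length is effectively bounded by the constant $C_\xi$. All the genuine content is already packaged in Theorem~\ref{t:superteorem} together with Lemma~\ref{l:perronhyp}\eqref{l:perronhyp:i}, so I do not anticipate any substantial obstacle beyond these two checks; the one subtle point is precisely that the $\sqrt{D}\Z$ case would force the borderline equality $|a_n'|=a_n$ at an irrational $\beta$-integer, which the strict Perron bound forbids.
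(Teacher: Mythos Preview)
Your proposal is correct and follows essentially the same approach as the paper's own proof: verify the hypotheses of Theorem~\ref{t:superteorem} using Lemma~\ref{l:perronhyp}\eqref{l:perronhyp:i}, and then exclude the $\sqrt{D}\Z$ alternative by noting that such partial quotients would be irrational $\beta$-integers forced by the same lemma to satisfy $|a_n'|<a_n$, contradicting $|a_n'|=a_n$. Your extra care about $a_0$ (passing to $\xi_1$ so that all partial quotients are $\geq 1$) is a minor technical point the paper glosses over but is handled correctly.
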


\begin{proof}
It suffices to check that the assumptions of Theorem~\ref{t:superteorem} are satisfied if the $\beta$-continued fraction expansion of $\xi$ is not finite.
This is shown by item (i) of Lemma~\ref{l:perronhyp}. By Theorem~\ref{t:superteorem}, the partial quotients in the period belong to $\Z$ or $\sqrt{D}\Z$. The second possibility
is however not possible, again by item (i) of Lemma~\ref{l:perronhyp}.
\end{proof}

We apply Theorem~\ref{t:quadraticperiodic} to establish \CFF{} for the four smallest quadratic Perron numbers. Note that Bernat~\cite{bernat} showed the statement for $\varphi$ by completely different methods.

\begin{thm}\label{t:3perron}
 The four Perron numbers
 \begin{align*}
  \varphi=\frac{1+\sqrt5}{2},& &1+\sqrt2, & & \frac{1+\sqrt{13}}{2}, & &\frac{1+\sqrt{17}}{2}
 \end{align*}
 have property \CFF{}.
\end{thm}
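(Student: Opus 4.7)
The plan is to invoke Theorem~\ref{t:quadraticperiodic}: for any quadratic Perron $\beta$, the $\beta$-continued fraction of an element $\xi\in\Q(\beta)$ is either finite or eventually periodic with all partial quotients in the period belonging to $\Z$. By the final clause of Lemma~\ref{l:perronhyp}, the positive rational integers in $\Z_\beta^+$ are exactly $\{1,\dotsc,\floor{\beta}\}$, and since $\floor{\beta}\le 2$ for each of the four candidates, any periodic tail must be a word over the alphabet $\{1,2\}$. To establish $\CFF$ it therefore suffices to rule out such a periodic tail for every $\xi\in\Q(\beta)$.

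I would first dispose of $\varphi$, $1+\sqrt{2}$ and $(1+\sqrt{13})/2$ together by applying Lemma~\ref{l:admis}. A direct computation shows $\beta-\floor{\beta}\le 1/\beta$ in each of these three cases (with equality for the first two), so whenever $a_i=\floor{\beta}$ the next partial quotient is forced to be at least $\beta$ and is therefore not a rational integer. Consequently the digit $\floor{\beta}$ cannot appear in the period, which then consists entirely of $1$'s. The periodic complete quotient would then equal $\varphi=[\overline{1}]$, which is impossible: for $\beta=\varphi$ the number $\varphi$ is itself a $\beta$-integer so its $\beta$-continued fraction expansion is the finite one $[\varphi]$; for the other two bases, $\varphi$ does not even lie in $\Q(\beta)$.

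The case $\beta=(1+\sqrt{17})/2$ requires a separate argument, since here $\beta-2=(\sqrt{17}-3)/2$ exceeds $1/\beta=(\sqrt{17}-1)/8$, so Lemma~\ref{l:admis} does not apply. Instead I would derive the sharper admissibility rule that, in a $\beta$-continued fraction whose tail lies in $\{1,2\}$, the partial quotient $2$ cannot be immediately followed by the partial quotient $1$. The argument tracks three successive complete quotients: from $a_i=2$ we get $\xi_{i+1}>(\sqrt{17}+3)/4$; assuming $a_{i+1}=1$ then forces $\xi_{i+2}<(\sqrt{17}+1)/4<2$, so that $a_{i+2}=1$ as well; and one more step yields $\xi_{i+3}>(\sqrt{17}+3)/2=\beta+1$, making $a_{i+3}$ a $\beta$-integer strictly greater than $\floor{\beta}$, hence not rational by Lemma~\ref{l:perronhyp}. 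Since the period is cyclic, the ``no $21$'' rule forces every $2$ to be followed by another $2$, so the period must be either $\overline{1}$ or $\overline{2}$; but neither $\varphi$ nor $1+\sqrt{2}$ lies in $\Q(\sqrt{17})$.

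The main obstacle is this last case: Lemma~\ref{l:admis} alone is insufficient, and the bookkeeping over three successive complete quotients needed to exclude the substring $21$ has to be carried out explicitly. Once that rule is in place, the cyclic combinatorial step, together with the observation that $\varphi,1+\sqrt{2}\notin\Q(\sqrt{17})$, finishes the proof.
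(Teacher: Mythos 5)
Your proposal is correct, and its skeleton matches the paper's: reduce via Theorem~\ref{t:quadraticperiodic} (together with the final clause of Lemma~\ref{l:perronhyp}) to periodic tails over $\{1,\dotsc,\floor{\beta}\}$, and finish by noting that $[\overline{1}]=\varphi$ and $[\overline{2}]=1+\sqrt{2}$ either lie in the wrong field or are inadmissible by Lemma~\ref{l:admis}. Where you genuinely diverge is in how mixed periods are excluded. The paper treats the three bases larger than $2$ uniformly: if the period contained both digits, some complete quotient would begin $[2,1,a_2,\xi_3]$ with $a_2\in\{1,2\}$ and $1<\xi_3<3$, and the explicit value $\frac{3a_2\xi_3+2\xi_3+3}{a_2\xi_3+\xi_3+1}$, being increasing in $a_2$ and decreasing in $\xi_3$, exceeds $\frac{18}{7}>\beta$, contradicting the leading partial quotient $2$. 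You instead ban the digit $\floor{\beta}$ from the period via Lemma~\ref{l:admis} for $\varphi$, $1+\sqrt{2}$, $\frac{1+\sqrt{13}}{2}$ --- exactly the shortcut the paper's remark after the theorem points out, and you correctly identify that it fails for $\frac{1+\sqrt{17}}{2}$ --- and for that last base you derive the ``no $21$'' rule by tracking three complete quotients; your inequalities $1/(\beta-2)=\frac{\sqrt{17}+3}{4}$, $\xi_{i+2}<\frac{\sqrt{17}+1}{4}<2$, and $\xi_{i+3}>\frac{\sqrt{17}+3}{2}=\beta+1$ all check out, and the resulting partial quotient $a_{i+3}>\floor{\beta}$ is irrational by Lemma~\ref{l:perronhyp}, contradicting the rationality of the tail guaranteed by Theorem~\ref{t:quadraticperiodic}. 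Both routes are sound: the paper's single monotonicity computation is more uniform, while yours isolates exactly where Lemma~\ref{l:admis} suffices and supplies a sharper admissibility rule where it does not. One cosmetic point: for $\beta=\varphi$, once the digit $1=\floor{\beta}$ is banned from the period there is nothing left, so the period is simply empty; your fallback observation that $\varphi\in\Z_\varphi$ has the finite expansion $[\varphi]$ (so no complete quotient can equal $[\overline{1}]$) is also valid, just redundant.
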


\begin{proof}
Let $\beta$ be one of the four Perron numbers above.
From Theorem~\ref{t:quadraticperiodic} we derive that the $\beta$-continued fraction expansion of any $\xi\in \Q(\beta)$ is either finite, or eventually periodic, with the partial quotients appearing in the period being integers
smaller than $\beta$.

If $\beta=\varphi$, then the only possible periodic tail is $[\overline{1}]$, but by Lemma~\ref{l:admis} it is not an admissible $\varphi$-continued fraction expansion.

If $\beta$ is one of the other three values appearing in the statement, then $2<\beta<\frac{18}{7}$ and the partial quotients in a periodic tail belong necessarily to $\{1,2\}$.

Assume that the period contains at least one partial quotient equal to 1 and one equal to 2. Up to replacing $\xi$ by one of its complete quotients, we can assume that the $\beta$-continued fraction expansion of $\xi$ begins with $[2,1,a_2,\dotsc]$, so that, writing $\xi_3$ for the third complete quotient, we have
\[
 \xi=[2,1,a_2,\xi_3]=2+\cfrac{1}{1+\cfrac{1}{a_2+\cfrac{1}{\xi_3}}}=\frac{3 a_2 \xi_3+2\xi_3+3}{a_2 \xi_3+\xi_3+1},
\]
where $a_2\in\{1,2\}$ and $1<\xi_3<3$.
This rational expression is easily seen to be strictly increasing with $a_2$ and strictly decreasing with $\xi_3$, so we have that $\xi>\frac{18}{7}>\beta$; this is a contradiction, because then the expansion would not start with a 2.

The only possible periodic tails are therefore $[\overline{1}]=\frac{1+\sqrt{5}}{2}$ and $[\overline{2}]=1+\sqrt{2}$. The first possibility is excluded because $\varphi\not\in\Q(\beta)$; for the same reason, the second possibility could only occur if $\beta=1+\sqrt{2}$, and in this case Lemma~\ref{l:admis} again shows that $[\overline{2}]$ cannot be a $\beta$-continued fraction expansion.

We conclude that every $\xi\in\Q(\beta)$ has a finite $\beta$-continued fraction expansion.
\end{proof}

\begin{remark}
Notice that the first three numbers in the statement of Theorem~\ref{t:3perron} satisfy the hypothesis of Lemma~\ref{l:admis}. This allows, in the proof of the theorem, to argue that the periodic tails cannot contain any partial quotient equal to 2, which leads to a quicker conclusion. However, this argument does not work for $\beta=\frac{1+\sqrt{17}}{2}$. For example $$\frac{164+65\sqrt{17}}{251}=[\overline{1,1,2,1,1,2,2,2,2}]$$ is a number whose classical continued fraction expansion is periodic and uses only partial quotients equal to 1 and 2; but this is not its $\beta$-expansion, which is
$$\frac{164+65\sqrt{17}}{251}=[1,1,\beta,2\beta^3+\beta^2+1,\beta^3+\beta+1,2,\beta+1].$$
\end{remark}

\begin{coro}
 The Perron numbers
 \begin{align*}
  \varphi=\frac{1+\sqrt5}{2},& &1+\sqrt2, & & \frac{1+\sqrt{13}}{2}, & &\frac{1+\sqrt{17}}{2}
 \end{align*}
 are the only quadratic Perron numbers smaller than 3 having property \CFF{}.

 Assuming Mercat's Conjecture~\ref{conj:mercat}, they are the only quadratic Perron numbers with property \CFF{}.
\end{coro}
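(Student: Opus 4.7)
My plan is to combine Theorem~\ref{t:3perron}, which establishes \CFF{} for the four listed numbers, with explicit counterexamples for the remaining quadratic Perron numbers below $3$, and then to invoke Proposition~\ref{p:CFFonlyFinitely} together with Remark~\ref{rem:mercat} to extend the conclusion to all quadratic Perron numbers under Mercat's Conjecture.

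First I enumerate the quadratic Perron numbers less than $3$. Since any such number is the larger root of some polynomial $X^2 - aX - b$ with $a\geq 1$, $a^2+4b>0$ and $\sqrt{a^2+4b}$ irrational, the additional constraint $(a+\sqrt{a^2+4b})/2<3$ reduces to finitely many pairs $(a,b)$, yielding exactly seven numbers: the four in the statement, plus $(3+\sqrt 5)/2$, $1+\sqrt 3$ and $(1+\sqrt{21})/2$. Theorem~\ref{t:3perron} takes care of the first four, so the task reduces to showing that the remaining three do not have \CFF{}.

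For $(3+\sqrt 5)/2\in\Q(\sqrt 5)$ the counterexample is immediate: the golden ratio $\varphi\in\Q(\sqrt 5)$ has regular continued fraction $[\overline 1]$, whose unique partial quotient $1$ is strictly smaller than $\lfloor (3+\sqrt 5)/2\rfloor=2$, so by Remark~\ref{r:ZCF-betaCF} this is also the $\beta$-continued fraction expansion of $\varphi$, which is therefore infinite. For $1+\sqrt 3\in\Q(\sqrt 3)$ and $(1+\sqrt{21})/2\in\Q(\sqrt{21})$ the same trick is not available, because $\varphi\notin \Q(\sqrt 3)\cup\Q(\sqrt{21})$; here I would instead produce, by a direct analysis of the greedy $\beta$-continued fraction algorithm applied to carefully chosen quadratic elements of these fields, a concrete element whose $\beta$-expansion does not terminate.

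The second assertion then follows quickly: assuming Mercat's Conjecture, Remark~\ref{rem:mercat} gives $m_K=3$ uniformly in the real quadratic field $K$, so by Proposition~\ref{p:CFFonlyFinitely} no quadratic irrational $\beta>3$ can have \CFF{}; combined with the first assertion this leaves only the four numbers listed. The main obstacle is the unconditional treatment of $1+\sqrt 3$ and $(1+\sqrt{21})/2$: since $\varphi$ lives only in $\Q(\sqrt 5)$ the clean Remark~\ref{r:ZCF-betaCF} argument is not available, and the bound $m_K=3$ from Remark~\ref{rem:mercat} is not sharp enough to exclude $\beta<3$, so one must extract the counterexamples from a finer analysis of the admissibility of partial quotients in $\Q(\sqrt 3)$ and $\Q(\sqrt{21})$.
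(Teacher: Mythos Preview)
Your overall strategy matches the paper's exactly: enumerate the seven quadratic Perron numbers below $3$, invoke Theorem~\ref{t:3perron} for the first four, refute \CFF{} for the remaining three by explicit counterexamples, and then appeal to Remark~\ref{rem:mercat} under Mercat's Conjecture for the second assertion. Your treatment of $(3+\sqrt5)/2$ via $\varphi=[\overline{1}]$ is also identical to the paper's.

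The gap is that for $1+\sqrt3$ and $(1+\sqrt{21})/2$ you do not actually produce a counterexample; you only promise one via ``a direct analysis of the greedy $\beta$-continued fraction algorithm'' and ``a finer analysis of the admissibility of partial quotients''. This is not a proof. The paper closes this gap by exhibiting concrete purely periodic continued fractions with partial quotients in $\{1,2\}$ whose values lie in the relevant field:
\[
[\overline{1,1,1,1,1,1,1,2,1,1,2,1,2,1,1,1,2,2}]=\tfrac{11055+10864\sqrt3}{18471}\in\Q(\sqrt3),
\]
\[
[\overline{1,1,1,2,1,2,1,2,2,2,1,1,2,2}]=\tfrac{117+44\sqrt{21}}{202}\in\Q(\sqrt{21}),
\]
and then checks that these are in fact the $\beta$-continued fraction expansions of those elements. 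Note that Remark~\ref{r:ZCF-betaCF} alone does not suffice here, since the partial quotient $2=\lfloor\beta\rfloor$ appears; one must verify directly that the $\beta$-algorithm reproduces these sequences. Without such explicit witnesses your argument for the first assertion is incomplete.
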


\begin{proof}
 Table~\ref{table} gives a full list of quadratic Perron numbers smaller than 3.
\begin{table}[h]
  \centering
  {\setlength{\tabcolsep}{5pt}
\renewcommand{\arraystretch}{1.3}
\begin{tabular}{|c|c|c|c|c|}
  \hline
  $\beta$ & Approximate value & Minimal polynomial & Pisot unit & \CFF{}\\ \hline
  $\frac{1}{2}(1+\sqrt{5})$ & 1.618033988... & $x^2-x-1$ & yes & yes\\
  $\frac{1}{2}(1+\sqrt{13})$ & 2.302775637... & $x^2-x-3$ & no & yes\\
  $1+\sqrt{2}$ & 2.414213562... & $x^2-2x-1$ & yes & yes \\
  $\frac{1}{2}(1+\sqrt{17})$ & 2.561552812... & $x^2-x-4$  & no & yes \\
  $\frac{1}{2}(3+\sqrt{5})$ & 2.618033988... & $x^2-3x+1$ & yes & no \\
  $1+\sqrt{3}$ & 2.732050807... & $x^2-2x-2$ & no & no\\
  $\frac{1}{2}(1+\sqrt{21})$ & 2.791287847... & $x^2-x-5$  & no & no \\
  \hline
\end{tabular}
\medskip
}
  \caption{Quadratic Perron numbers smaller than 3.}\label{table}
\end{table}

Theorem~\ref{t:3perron} shows that the first four of them have property \CFF{}.
The following counterexamples, which can be respectively shown to be $\beta$-continued fraction expansions for the last three values of $\beta$ in the list, show that these values do not have property \CFF{}:
\begin{align*}
[\overline{1}] &=\frac{1+\sqrt{5}}{2},\\
 [\overline{1,1,1,1,1,1,1,2,1,1,2,1,2,1,1,1,2,2}] &=\frac{11055+10864\sqrt{3}}{18471},\\
[\overline{1,1,1,2,1,2,1,2,2,2,1,1,2,2}] &= \frac{117+44\sqrt{21}}{202}.
 \end{align*}
According to Remark~\ref{rem:mercat}, under Mercat's Conjecture~\ref{conj:mercat} no other quadratic Perron number can have property \CFF{}.
\end{proof}


\begin{remark}
Refuting \CFF{} for quadratic integers bigger than 3 depends on the validity of Mercat's Conjecture. However, for the subclass of quadratic Pisot units, we can provide explicit examples of bases $\beta$ for which we can disprove property \CFF{}. In this way we are able to determine unconditionally that the only quadratic Pisot units with property \CFF{} are $\frac12(1+\sqrt{5})$ and $1+\sqrt{2}$. The examples of infinite $\beta$-continued fractions in the field $\Q(\beta)$ for other quadratic Pisot units $\beta$ are the following:

\begin{itemize}
  \item $\beta>1$, root of $X^2-mX-1$, $m\geq 3$ :

  \begin{itemize}
    \item $m$ even :
    $$
    [\overline{(m-2)/2, 1, 1}] = \frac{m-2+\sqrt{m^2+4}}{4}=\frac{\beta-1}{2}\in\Q(\beta),
    $$
    \item $m$ odd, $m\geq 5$ :
    $$
    [\overline{(m-3)/2,(m+1)/2,3,1}] = \frac{m^2-3m-m+m\sqrt{m^2+4}}{4m+6}\in\Q(\beta),
    $$
    \item $m=3$ :
    $$
    [\overline{1,1,2,2,2}]=\frac{11+5\sqrt{13}}{17}\in\Q(\sqrt{13}).
    $$
  \end{itemize}

  \item $\beta>1$, root of $X^2-mX+1$, $m\geq 3$ :
  $$
  [\overline{1,m-2}]=\frac{m-2+\sqrt{m^2-4}}{2(m-2)}\in\Q(\beta).
  $$
\end{itemize}

This last example was already given in~\cite{Kolar}.
\end{remark}

\subsection{Square roots}\label{subsec:sqrt}

The simplest example of a non-Perron quadratic integer is an irrational square root of a rational integer $D$, i.e.\ $\beta=\sqrt{D}\notin\Q$. Such $\beta$ satisfies $\beta'=-\beta$. For the study of finiteness and periodicity of $\beta$-continued fractions in this case we apply results of Section~\ref{sec:MCF_in_quadratic}.

\begin{thm}\label{t:squareroots}
  Let $D$ be a positive integer with irrational square root. Denote $\beta=\sqrt{D}$. Then $\beta$ satisfies $\CFP$ and, under Mercat's conjecture, does not satisfy $\CFF$.
\end{thm}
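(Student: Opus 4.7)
The statement splits into two assertions addressed by different parts of the machinery developed so far.

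For the $\CFP$ part, I would observe that $\beta=\sqrt D$ has Galois conjugate $\beta'=-\sqrt D$, so $|\beta'|=\beta$. This is exactly the hypothesis of item~(iv) of Lemma~\ref{l:perronhyp}, which ensures $|a'|\le a$ for every $a\in\Z_\beta^+$. The $\beta$-continued fraction of any $\xi\in\Q(\beta)$ therefore has partial quotients in $\mathcal O_K$ with $a_n\ge 1$ (for $n\ge 1$) and $|a_n'|\le a_n$. These are precisely the hypotheses of Theorem~\ref{t:superteorem}, which yields that the expansion is finite or eventually periodic, with the period contained in $\Z$ or in $\sqrt D\,\Z$.

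For the failure of $\CFF$ under Mercat's conjecture, the first step will be to invoke Proposition~\ref{p:CFFonlyFinitely} together with Remark~\ref{rem:mercat}: under Mercat's conjecture, $m_K=3$ works uniformly, so every irrational $\beta\in\mathcal O_K$ with $\beta>3$ fails $\CFF$. This disposes of all $D\ge 10$ at once.

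It will then remain to treat the small values $D\in\{2,3,5,6,7,8\}$ by explicit construction (most of which is in fact unconditional). For $D=5$ I would take $\varphi=(1+\sqrt{5})/2\in\Q(\sqrt 5)$: its regular continued fraction $[\overline{1}]$ uses only partial quotients strictly below $\lfloor\sqrt 5\rfloor=2$, so Remark~\ref{r:ZCF-betaCF} ensures that this is also its $\beta$-CF. For $D=2$ a direct run of the $\beta$-CF algorithm on $\xi=1+\sqrt 2$ works: since $1+\sqrt 2$ is not a $\beta$-integer (the candidate digit string $11\bullet$ fails the Parry admissibility condition) while $2=\beta^2\in\Z_\beta$, one has $\lfloor \xi\rfloor_{\Z_\beta}=2$ and then $\xi_1=1/(\sqrt 2-1)=1+\sqrt 2$, producing the infinite $\beta$-CF $[\overline{2}]$. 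For each $D\in\{3,6,7,8\}$ the plan is the same: pick a convenient element of $\Q(\sqrt D)$ (a simple rational such as $11/2$ works for $D=3$), execute the algorithm while tracking the $\beta$-integers in the relevant range, and detect a repeated complete quotient; by the $\CFP$ part just proved, the resulting period will automatically consist of elements of $\Z$ or of $\sqrt D\,\Z$.

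The main obstacle is precisely this last family of cases. For $D\in\{3,6,7,8\}$ the shortcut via Remark~\ref{r:ZCF-betaCF} is unavailable: when $D=3$, $\lfloor\sqrt 3\rfloor=1$ and no integer partial quotient can be strictly smaller; when $D\in\{6,7,8\}$, complete quotients of a $\{1,2\}$-periodic continued fraction may exceed $\sqrt D$. In each such case the $\beta$-CF algorithm has to be executed by hand using an explicit description of the relevant $\beta$-integers. This is bookkeeping rather than a conceptual obstacle, but it is the step where the abstract arguments give way to concrete computation.
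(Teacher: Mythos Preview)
Your plan is correct and matches the paper's strategy almost exactly: for $\CFP$ you invoke Lemma~\ref{l:perronhyp}\eqref{l:perronhyp:iii} to verify the hypotheses of Theorem~\ref{t:superteorem}, and for the failure of $\CFF$ you use Mercat (via Remark~\ref{rem:mercat}) to reduce to $D\in\{2,3,5,6,7,8\}$ and then exhibit explicit periodic expansions.

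The only tactical difference is in how the six small cases are handled. The paper does not run the $\beta$-algorithm forward from a chosen $\xi$; instead it simply writes down, for each $D$, a purely periodic continued fraction with partial quotients in $\Z_\beta$ (in fact in $\sqrt{D}\,\Z$ for $D\in\{2,5,6,7,8\}$ and in $\Z$ for $D=3$) and checks that its value lies in $\Q(\sqrt{D})$ --- e.g.\ $[\overline{4\sqrt{2}}]=3+2\sqrt{2}$, $[\overline{3,4}]=\tfrac{3+2\sqrt{3}}{2}$, $[\overline{\sqrt{7},2\sqrt{7}}]=\tfrac{3+\sqrt{7}}{2}$. One then only has to verify that the listed partial quotients are genuine $\beta$-integers and that the floor steps are consistent, which is quicker than tracking complete quotients through the algorithm. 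Your witnesses $[\overline{1}]=\varphi$ for $D=5$ and $[\overline{2}]=1+\sqrt{2}$ for $D=2$ are correct (and simpler than the paper's), and your proposed bookkeeping for $D\in\{3,6,7,8\}$ would work, but displaying ready-made periods as the paper does is the cleaner way to finish.
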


\begin{proof}
The fact that $\beta=\sqrt{D}$ satisfies $\CFP$ follows from Theorem~\ref{t:superteorem} with the use of Lemma~\ref{l:perronhyp} item \eqref{l:perronhyp:iii}. Let us now focus on property $\CFF$.
Assuming the Mercat's conjecture, we only need to check $\beta\in\{\sqrt2,\sqrt3,\sqrt5,\sqrt6,\sqrt7,\sqrt8\}$. To disprove $\CFF$ in these cases it suffices to consider periodic $\beta$-continued fractions
\begin{align*}
 [\overline{4\sqrt{2}}] &=3+2\sqrt{2}, & [\overline{8\sqrt{6},2\sqrt{6}}] &=2(5+2\sqrt{6}),\\
 [\overline{3,4}] &=\frac{3+2\sqrt{3}}{2}, & [\overline{\sqrt{7},2\sqrt{7}}] &=\frac{3+\sqrt{7}}{2},\\
 [\overline{32\sqrt{5},2\sqrt{5}}] &= 16(9+\sqrt{5}), & [\overline{2\sqrt{8}}] &= 3+\sqrt{8}.\qedhere
\end{align*}
\end{proof}

\subsection{Non-Perron quadratic integers with positive conjugate}\label{subsec:nonperronpos}

Consider now as base $\beta$ a quadratic integer which is not Perron, so that Theorem~\ref{t:superteorem} may no longer be applied. We are nevertheless able to obtain information on the $\beta$-continued fractions by other means. We will show that neither $\CFF$ nor $\CFP$ hold by showing the existence of elements in the quadratic field $\Q(\beta)$ with aperiodic infinite $\beta$-continued fraction expansion.

\begin{thm}\label{t:nonperronklad}
 Let $\beta>1$ be a quadratic integer with conjugate $\beta'$ satisfying $\beta'>\beta$. Then each $\xi\in\Q(\beta)$, such that $\xi>\beta$ and $\xi'\in(-1,0)$, has an aperiodic $\beta$-continued fraction expansion.

In particular, $\beta$ satisfies neither $\CFF$ nor $\CFP$.
\end{thm}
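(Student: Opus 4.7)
The plan is to show by contradiction that the $\beta$-continued fraction of $\xi$ is neither finite nor eventually periodic, the main lever being the stability of $\xi_n'\in(-1,0)$ along the expansion.

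First I would verify by induction that $\xi_n'\in(-1,0)$ for every $n$. The key is that every partial quotient $a_n\in\Z_\beta^+$ satisfies $a_n'\ge 1$: this is trivial when $a_n\in\Z$, and for $a_n\ge\beta$ non-rational, Lemma~\ref{l:perronhyp}(ii) actually gives $a_n'>a_n>1$. Then $\xi_{n+1}'=1/(\xi_n'-a_n')$ maps $(-1,0)$ into itself. A direct consequence is that the expansion cannot terminate, because any positive $\beta$-integer $x\ge 1$ has $x'\ge x>0$ (again by Lemma~\ref{l:perronhyp}(ii)), so $\xi_n\in\Z_\beta$ would contradict $\xi_n'<0$.

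Assume now the expansion is eventually periodic with minimal preperiod length $r$ and period length $s$, and apply Proposition~\ref{p:evperrac} to the purely periodic tail $\xi_r$. This gives $p_{s-1}+q_{s-2}\in\Z\cup\sqrt{D}\Z$; since $a_i'\ge 1$ in the period, the positivity of the coefficients of the continuants makes both $p_{s-1}+q_{s-2}$ and its conjugate positive, which excludes the $\sqrt{D}\Z$ branch and forces $(p_{s-1}+q_{s-2})'=p_{s-1}+q_{s-2}$. Exploiting the monotonicity of continuants ($K_n$ has positive integer coefficients and $a_i'\ge a_i$ throughout the period, with strict inequality whenever some $a_i\notin\Z$), this equality forces every $a_i$ in the period to be a rational integer in $\{1,\dots,\lfloor\beta\rfloor\}$. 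Consequently each $\xi_n$ with $n\ge r$ lies in $[1,\beta)$, and for $r=0$ this already contradicts $\xi>\beta$.

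The main obstacle is the case $r\ge 1$. From $\xi_r=\xi_{r+s}$ one derives $\xi_{r-1}-\xi_{r+s-1}=a_{r-1}-a_{r+s-1}$, and applying Galois together with the bound $|\xi_{r-1}'-\xi_{r+s-1}'|<1$ yields $|a'_{r-1}-a_{r+s-1}|<1$. If $a_{r-1}\in\Z$ this forces $a_{r-1}=a_{r+s-1}$, hence $\xi_{r-1}=\xi_{r+s-1}$, contradicting minimality of $r$. Otherwise $a_{r-1}\ge\beta$ is non-rational, so Lemma~\ref{l:perronhyp}(ii) gives $a'_{r-1}>\beta$ while the Galois bound squeezes $a'_{r-1}\in(\beta,\lfloor\beta\rfloor+1)$. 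I expect the contradiction in this remaining case to come from a careful examination of which $\beta$-integers can have a conjugate in that narrow window, combined with finer estimates on $\xi_{r-1}$ and $\xi_{r+s-1}$ using the preperiod structure, and I would pursue this as the technical heart of the argument.
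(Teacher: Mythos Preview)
Your approach is essentially the paper's, and the first two steps are correct: the stability of $\xi_n'\in(-1,0)$ (hence non-termination) and the reduction of the periodic tail to partial quotients in $\{1,\dots,\lfloor\beta\rfloor\}$ via Proposition~\ref{p:evperrac} reproduce exactly what the paper does through Proposition~\ref{p:evperracklad}.

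The gap is precisely where you flag it, but its resolution is not the ``careful examination of a narrow window'' that you anticipate. You have correctly obtained $|a_{r-1}'-a_{r+s-1}'|<1$. What you are missing is the \emph{second} assertion of Lemma~\ref{l:perronhyp}\eqref{l:perronhyp:ii}, which you cite but do not use: since $\beta'-\beta\geq 1$ (Remark~\ref{rem:vzdalenostbeta-beta'}), one has $\lfloor\beta'\rfloor>\lfloor\beta\rfloor$, and then the lemma guarantees $|x'-y'|\geq 1$ for any two distinct $x,y\in\Z_\beta$. Applied to $a_{r-1}$ and $a_{r+s-1}$ this forces $a_{r-1}=a_{r+s-1}$ immediately, with no case distinction on rationality, whence $\xi_{r-1}=\xi_{r+s-1}$ and the minimality of $r$ is contradicted. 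This is exactly how the paper closes the argument; once you invoke this $1$-separation of conjugates of $\beta$-integers, your proof is complete and coincides with the paper's.
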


The first step towards the proof is an application of the algebraic argument in Proposition~\ref{p:evperrac}.

\begin{prop}\label{p:evperracklad}
Let $\beta>1$ be a quadratic integer such that $\beta'>\beta$.
 Let $\xi$ be an element in $\Q(\beta)$ whose $\beta$-continued fraction expansion is eventually periodic. Then the period consist only of partial quotients in $\{1,\dotsc,\floor{\beta}\}$.
\end{prop}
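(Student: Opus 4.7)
The plan is to mirror the strategy of Proposition~\ref{p:ZorSqrtDZ} in the ``opposite direction'': instead of exploiting $|a_i'|\leq a_i$ to squeeze an inequality tight, I will use that $\beta'>\beta>0$ forces large $\beta$-integers to strictly grow under the Galois action, which will be incompatible with the algebraic constraint provided by Proposition~\ref{p:evperrac}.

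First I would reduce to the purely periodic case. If $\xi\in\Q(\beta)$ has an eventually periodic expansion, then the complete quotient $\xi_r=[\overline{a_r,\dots,a_{r+s-1}}]$ taken at the start of a full period is purely periodic; by the Möbius relation~\eqref{eq:raclom} with $p_{r-1},p_{r-2},q_{r-1},q_{r-2}\in\O_K$, it still belongs to $\Q(\beta)$. So I may assume $\xi=[\overline{a_0,\dots,a_n}]$ itself is purely periodic. Applying Proposition~\ref{p:evperrac}, the quantity $x:=p_n+q_{n-1}$ lies in $\Z\cup\sqrt{D}\Z$, and since all $a_i\geq 1$ we have $x>0$, so $|x'|=x$.

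The decisive step compares $x$ with $x'$ using the continuant expressions $p_n=K_{n+1}(a_0,\dots,a_n)$ and $q_{n-1}=K_{n-1}(a_1,\dots,a_{n-1})$, which are polynomials with positive integer coefficients where every variable appears in at least one monomial. Because $\beta'>0$ and every partial quotient $a_i\in\Z_\beta^+$ has a greedy expansion with nonnegative digits, the Galois conjugate $a_i'$ is positive; moreover, Lemma~\ref{l:perronhyp}\eqref{l:perronhyp:ii} (applied with $\sigma(\beta)=\beta'>\beta$) says that whenever $a_i>\floor{\beta}$ one has strictly $a_i'>a_i$, while for $a_i\in\{1,\dots,\floor{\beta}\}\subset\Z$ one has $a_i'=a_i$. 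Therefore, if the period contained any partial quotient $a_k>\floor{\beta}$, applying the Galois automorphism to the continuant representation of $x$ would give $x'>x$ (by positivity of the coefficients and appearance of $a_k$ in some monomial of $K_{n+1}$), contradicting $|x'|=x$.

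The main obstacle is essentially conceptual: identifying that the correct algebraic hook is Proposition~\ref{p:evperrac}, which was proved in a Perron-flavoured context but whose conclusion depends only on $\xi\in K$ and does not require any size hypothesis on $|a_i'|$. Once this is imported and coupled with Lemma~\ref{l:perronhyp}\eqref{l:perronhyp:ii}, the contradiction is immediate and no delicate estimates are needed; one just has to verify cleanly that $a_i'>0$ for all $i$, which relies crucially on the hypothesis $\beta'>0$ (not merely $\beta'>\beta$).
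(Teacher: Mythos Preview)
Your proof is correct and follows essentially the same route as the paper: reduce to the purely periodic case, invoke Proposition~\ref{p:evperrac} to get $x=p_n+q_{n-1}\in\Z\cup\sqrt{D}\Z$, and then use Lemma~\ref{l:perronhyp}\eqref{l:perronhyp:ii} together with the positivity of continuant coefficients to derive $x'>x$ whenever some $a_k>\floor{\beta}$, contradicting $|x'|=x$. If anything, you are slightly more explicit than the paper in noting that the positivity of all $a_i'$ (needed for the monotonicity argument) comes from $\beta'>0$.
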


\begin{proof}
  Assume (replacing it by a complete quotient if needed) that  $\xi=[\overline{a_0,\dotsc,a_n}]$ is the purely periodic $\beta$-continued fraction expansion of an element in $\Q(\beta)$. Then by Proposition~\ref{p:evperrac}, for $x=p_n+q_{n-1}$ we have that $x'=\pm x$.

Recall now that $p_n$, $q_{n-1}$ arise from the continuants, and thus also $x$ is a polynomial with positive coefficients in the partial quotients $a_0,\dotsc,a_n$. By item~\eqref{l:perronhyp:ii} of Lemma~\ref{l:perronhyp}, for each partial quotient $a_i$ in $\Z_\beta^+\setminus\{1,\dotsc,\floor{\beta}\}$, its image $a'_i$ satisfies $a'_i>a_i$.
We obtain that $x'\neq \pm x$ unless all partial quotients appearing in the period belong to $\{1,\dotsc,\floor{\beta}\}$, thus proving the statement.
\end{proof}

\begin{remark}\label{rem:vzdalenostbeta-beta'}
  Let $\beta$ be a quadratic integer. Notice that $\abs{\beta-\beta'}$ is at least one, because it is equal to the absolute value of the square root of the discriminant of the minimal polynomial of $\beta$.
\end{remark}

\begin{proof}[Proof of Theorem~\ref{t:nonperronklad}]
First notice that if $\xi_i' \in (-1,0)$, then $\xi_{i+1}'=\tfrac1{\xi_i'-a_i'}\in(-1,0)$ because $a_i'\geq1$. Thus the $\beta$-continued fraction expansion of any $\xi\in \Q(\beta)$ with $\xi'\in(-1,0)$ is infinite.

Next we prove that if $\xi$ has eventually periodic $\beta$-continued fraction, then it is in fact purely periodic. Then, by Proposition~\ref{p:evperracklad}, all its partial quotients belong to $\{1,\dots,\floor{\beta}\}$. However, $a_0=\floor{\xi}_\beta\geq \beta$, which gives a contradiction.

To show pure periodicity of the $\beta$-continued fraction of $\xi$ we argue as in the characterization of purely periodic classical continued fractions. Assume that $\xi$ has an eventually periodic $\beta$-continued fraction expansion. Take $k<l$ to be the minimal indices such that $\xi_k=\xi_l$. Then we either have $k=0$ and the proof is finished, or
$k>0$ and we have $\xi'_{k-1}=a'_{k-1}+\frac1{\xi'_k}\in(-1,0)$, which implies
\begin{equation}\label{eq:aunique}
-1+\frac1{\xi'_k}<a'_{k-1}<\frac1{\xi'_k}.
\end{equation}
By Remark~\ref{rem:vzdalenostbeta-beta'}, we have $\floor{\beta'}>\floor{\beta}$, and therefore by item (ii) of Lemma~\ref{l:perronhyp}, the distances between the conjugates $x',y'$ of $\beta$-integers $x,y$ are at least one. Consequently, \eqref{eq:aunique} defines $a'_{k-1}=a'_{l-1}$ uniquely.
Thus
$$
\xi_{k-1}=a_{k-1}+\frac1{\xi_k} = a_{l-1}+\frac1{\xi_l} = \xi_{l-1}\,,
$$
which contradicts the minimality of indices $k,l$.
\end{proof}

\begin{remark}
  Let us mention that in refuting $\CFF$ for $\beta$ with $\beta'>\beta$ we did not use Mercat's conjecture.
   Another interesting fact to mention is that the infinitely many elements of $\Q(\beta)$ with aperiodic $\beta$-continued fraction expansion found in Theorem~\ref{t:nonperronklad} belong to the family of numbers with purely periodic
   regular continued fraction.
\end{remark}

\subsection{Non-Perron quadratic integers with negative conjugate}

It remains to treat the case when $\beta>1$ is a quadratic integer such that $\beta'<-\beta$. These $\beta$'s are the positive irrational roots of the polynomials of the shape
\begin{equation}\label{eq:betanegatconjug}
X^2+bX-c,\quad\text{where $b\geq 1$ and $c\geq b+2$ are integers.}
\end{equation}
Notice that in this case $\beta'=-\beta-b$.
Here the situation appears to be more complicated than in the other cases already treated. We shall prove the following statement.

\begin{thm}\label{t:negatconjugnotCFF}
Let $\beta>1$ be a root of~\eqref{eq:betanegatconjug} with $b\geq 4$, i.e.\ $\beta$ is a
quadratic integer with conjugate $\beta'$ satisfying $\beta'\leq -\beta-4$. Let $\xi\in\Q(\beta)$ be such that $\xi'\in(-1,0)$. Then $\xi$ does not have a finite $\beta$-expansion.

In particular, $\beta$ does not satisfy property $\CFF$.
\end{thm}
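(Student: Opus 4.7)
The plan is a proof by contradiction: assume $\xi$ has a finite $\beta$-continued fraction $[a_0,\dotsc,a_N]$ with $\xi_N=a_N\in\Z_\beta$ and $\xi'\in(-1,0)$, and I derive a contradiction by showing via backward induction on $n$ (from $N$ down to $0$) that $\xi_n'\notin(-1,0)$ for every $n$.

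The first ingredient is a base lemma on Galois conjugates of $\beta$-integers: for every nonzero $a\in\Z_\beta^+$, either $a'\geq 1$ or $a'\leq-(\beta+b-\floor{\beta})$, and the sign is determined by the parity of the top index $k$ in the greedy expansion $a=\sum_{i=0}^k x_i\beta^i$, because $\beta'<0$. For $k=0$ this is immediate from $a'=a\in\{1,\dotsc,\floor{\beta}\}$. For $k\geq 1$, the dominant term $x_k(\beta')^k$ of absolute value at least $(\beta+b)^k$ beats the geometric bound $\floor{\beta}\tfrac{(\beta+b)^k-1}{\beta+b-1}$ on the tail $\sum_{i<k}x_i(\beta')^i$; a short rearrangement gives $|a'|\geq\beta+b-\floor{\beta}$, which under $b\geq 4$ is at least $4$. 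In particular no nonzero $\beta$-integer has conjugate in $(-1,0)$, so the inductive base $\xi_N'=a_N'\notin(-1,0)$ is settled.

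The backward step uses $\xi_n'=a_n'+1/\xi_{n+1}'$: under the strong invariant $|\xi_{n+1}'|\geq 1$ one has $|1/\xi_{n+1}'|\leq 1$, and combined with the dichotomy $a_n'\geq 1$ or $a_n'\leq -(\beta+b-\floor{\beta})$ from the base lemma (applicable for $n\geq 1$ where $a_n\geq 1$), one concludes $\xi_n'\geq 0$ or $\xi_n'\leq-3$, in either case outside $(-1,0)$. The main obstacle is maintaining the strong invariant $|\xi_{n+1}'|\geq 1$ throughout the induction: the weaker invariant $\xi_{n+1}'\notin(-1,0)$ alone would still allow $\xi_{n+1}'\in(0,1)$, making $1/\xi_{n+1}'$ unbounded and breaking the estimate. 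The heart of the proof is therefore to rule out the small positive band $(0,1)$ from the possible values of the complete-quotient conjugates as well, using the slack that $b\geq 4$ provides in the conjugate estimates together with the specific digit structure of $\Z_\beta$. The edge case $a_0=0$ (i.e.\ $\xi\in(0,1)$) is particularly delicate and would require a separate analysis focusing on the structure of $\xi_1=1/\xi$ and its first partial quotient $a_1\in\Z_\beta$.
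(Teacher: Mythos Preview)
Your framework is right and parallels the paper's: track the conjugates $\xi_n'$ via a dichotomy on $a'$ for $a\in\Z_\beta^+$ (the paper shows every nonzero $a\in\Z_\beta^+$ has $a'=1$, $a'\geq 2$, or $a'<-4$; your cruder form $a'\geq 1$ or $a'<-4$ already suffices). But the proposal is not a proof: you name the two gaps (the ``small positive band'' and $a_0=0$) and leave both open.

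The invariant ``$|\xi_n'|\geq 1$'' that you reach for does not propagate backward: $\xi_{n+1}'\leq-2$ together with $a_n'=1$ gives $\xi_n'=1+1/\xi_{n+1}'\in[\tfrac12,1)$. The asymmetric invariant $\xi_n'\notin(-2,\tfrac12)$ \emph{does} propagate: if $\xi_{n+1}'\leq-2$ or $\xi_{n+1}'\geq\tfrac12$ then $1/\xi_{n+1}'\in[-\tfrac12,2]\setminus\{0\}$, so $a_n'\geq 1$ forces $\xi_n'\geq\tfrac12$ and $a_n'<-4$ forces $\xi_n'<-2$. Since $a_N'\notin(-2,\tfrac12)$ and $(-1,0)\subset(-2,\tfrac12)$, this closes your backward scheme. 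The paper runs the equivalent computation \emph{forward}: starting from $\xi_0'\in(-1,0)$, a three-state automaton on the intervals $(-1,0)$, $(0,\tfrac12)$, $(-2,-1)$ shows that $\xi_k'$ never escapes their union, hence never equals the conjugate of a $\beta$-integer, and the expansion is infinite. Your missing ``heart'' is exactly this finite case analysis.

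Your worry about $a_0=0$ is well founded. Both arguments require $a_n\in\Z_\beta^+\setminus\{0\}$, which for $n\geq 1$ is automatic but for $n=0$ needs $\xi\geq 1$. Indeed $\xi=1/\beta$ has $\xi'=1/\beta'\in(-\tfrac14,0)$ and the finite expansion $[0,\beta]$, so the first sentence of the theorem tacitly assumes something like $\xi\geq 1$; the conclusion that $\CFF$ fails is of course unaffected, since one may take any $\xi\geq 1$ with $\xi'\in(-1,0)$.
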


\begin{lem}\label{lemma:inequalitiesk}
Let $b\geq 1$ be integers and assume that $\beta'= -\beta-b$. Then the function
\[
F(k)=\beta'^{2k+1}+\floor{\beta}\sum_{i=0}^k \beta'^{2i},    \quad k\geq 0,
\]
is decreasing with $k$, and the function
\[
G(k)=\beta'^{2k}+\floor{\beta}\sum_{i=0}^{k-1} \beta'^{2i+1},    \quad k\geq 1,
\]
is increasing with $k$.
\end{lem}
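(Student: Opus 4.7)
The plan is to prove both monotonicity statements by a direct telescoping computation. Specifically, I would compute the consecutive differences $F(k+1)-F(k)$ and $G(k+1)-G(k)$ and extract a common factor, reducing everything to verifying the positivity of a single quadratic expression in $\beta'$.

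For $F$, the routine algebra gives
\[
F(k+1)-F(k) = \beta'^{2k+3} - \beta'^{2k+1} + \floor{\beta}\beta'^{2k+2} = \beta'^{2k+1}\bigl(\beta'^2 + \floor{\beta}\beta' - 1\bigr),
\]
and similarly
\[
G(k+1)-G(k) = \beta'^{2k+2} - \beta'^{2k} + \floor{\beta}\beta'^{2k+1} = \beta'^{2k}\bigl(\beta'^2 + \floor{\beta}\beta' - 1\bigr).
\]
Since $\beta'<0$, the prefactor $\beta'^{2k+1}$ is negative while $\beta'^{2k}$ is positive, so the two monotonicity claims will follow at once once I show that $\beta'^2 + \floor{\beta}\beta' - 1 > 0$.

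The key positivity step uses the hypothesis $\beta' = -\beta-b$ to rewrite
\[
\beta'^2 + \floor{\beta}\beta' - 1 = (\beta+b)^2 - \floor{\beta}(\beta+b) - 1 = (\beta+b)\bigl(b + \{\beta\}\bigr) - 1,
\]
where $\{\beta\} = \beta - \floor{\beta}\in[0,1)$. Since $\beta>1$ and $b\geq 1$, we have $\beta + b > 2$ and $b + \{\beta\} \geq 1$, so the product exceeds $2$ and the whole expression is strictly greater than $1$, hence positive. This gives $F(k+1)-F(k)<0$ and $G(k+1)-G(k)>0$, completing the proof.

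There is no real obstacle here; the only point that requires a moment of thought is keeping track of the sign of $\beta'^{2k+1}$ versus $\beta'^{2k}$, which is what makes the same positive factor produce opposite monotonicity for $F$ and $G$. The computation does not use the full strength of the surrounding hypotheses (in particular, it works for every $b\geq 1$, not just $b\geq 4$), so I would write the proof in this slightly greater generality as the statement already permits.
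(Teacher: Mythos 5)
Your proof is correct and follows essentially the same route as the paper: compute the telescoping differences $F(k+1)-F(k)=\beta'^{2k+1}\bigl(\beta'^2+\floor{\beta}\beta'-1\bigr)$ and $G(k+1)-G(k)=\beta'^{2k}\bigl(\beta'^2+\floor{\beta}\beta'-1\bigr)$, then show the common quadratic factor is positive (the paper does this via $\floor{\beta}\le\beta$ and $\beta'^2+\beta\beta'=(\beta+b)b$, while you rewrite it as $(\beta+b)(b+\{\beta\})-1$, which is the same estimate), and read off the opposite signs from the parity of the power of $\beta'$. Note only that your closing remark about extra generality is moot, since the lemma as stated already assumes only $b\ge 1$.
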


\begin{proof}
We see that
\begin{align*}
F(k+1)-F(k)=\beta'^{2k+1}\left(\beta'^2-1+\floor{\beta}\beta'\right)\\
G(k+1)-G(k)=\beta'^{2k}\left(\beta'^2-1+\floor{\beta}\beta'\right).
\end{align*}
But $\floor{\beta}\leq\beta$, which implies that
\[
\beta'^2+\floor{\beta}\beta'-1\geq \beta'^2+\beta\beta'-1= (\beta+b)b-1> 1,
\]
which completes the proof.
\end{proof}

\begin{prop}
Assume that $\beta'\leq-\beta-4$.
Then for every non-zero $x\in\Z_\beta^+$ we have that either $x=1$, or $x'\geq 2$, or $x'< -4$.
\end{prop}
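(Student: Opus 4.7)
The plan is to do a case analysis on the length of the greedy $\beta$-expansion $x=\sum_{i=0}^k x_i\beta^i$ (with $x_k\geq 1$ and $0\leq x_i\leq m:=\floor{\beta}$), and to observe that the extremal values of $x'=\sum_{i=0}^k x_i\beta'^i$ over all such digit sequences are precisely the quantities $F$ and $G$ already analyzed in the preceding Lemma~\ref{lemma:inequalitiesk}. The key point is that $\beta'=-\beta-b<0$ with $b\geq 4$, so the terms $x_i\beta'^i$ alternate in sign, and whether $x'$ is forced to be large positive or very negative is controlled by the parity of $k$.

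The case $k=0$ is immediate: $x=x_0\in\{1,\dots,m\}$, so either $x=1$ or $x'=x_0\geq 2$. For $k\geq 1$ odd, write $k=2j+1$; the leading term $x_k\beta'^k$ is negative. Maximizing $x'$ subject only to $0\leq x_i\leq m$ and $x_k\geq 1$ forces $x_k=1$, $x_i=m$ at even positions $i<k$, and $x_i=0$ at odd positions $i<k$; that maximum is exactly $F(j)=\beta'^{2j+1}+m\sum_{i=0}^j\beta'^{2i}$. By the monotonicity of $F$ given in Lemma~\ref{lemma:inequalitiesk}, one then has $x'\leq F(j)\leq F(0)=\beta'+m=m-\beta-b$, and since $m<\beta$ and $b\geq 4$ this last expression is strictly less than $-b\leq -4$, so $x'<-4$.

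If $k\geq 2$ is even, write $k=2j$ with $j\geq 1$. The symmetric argument minimizes $x'$ by taking $x_k=1$, $x_i=0$ at even $i<k$, and $x_i=m$ at odd $i<k$, producing $x'\geq G(j)=\beta'^{2j}+m\sum_{i=0}^{j-1}\beta'^{2i+1}$. The monotonicity of $G$ gives $x'\geq G(1)=\beta'^2+m\beta'=(\beta+b)(\beta+b-m)$; since $\beta+b>5$ and $\beta+b-m=b+(\beta-m)>4$, this yields $x'>20>2$.

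I do not expect a real obstacle here once the extremal patterns are identified: the whole argument rests on correctly recognizing that the worst-case digit choices for the two parities of $k$ reproduce exactly $F$ and $G$, and then invoking the monotonicity already established in Lemma~\ref{lemma:inequalitiesk} together with the trivial numerical inequalities at the endpoints $F(0)<-4$ and $G(1)>2$. A minor point worth making explicit is that greediness of the $\beta$-expansion is not needed anywhere: the extremal bounds are taken over all digit sequences in $[0,m]$ with nonzero leading digit, hence apply a fortiori to genuine $\beta$-integers.
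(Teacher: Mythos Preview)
Your proposal is correct and follows essentially the same approach as the paper: split according to the parity of the top degree $k$, bound $x'$ above (for $k$ odd) or below (for $k$ even) by the extremal digit patterns, recognize these bounds as $F(j)$ and $G(j)$, and then invoke the monotonicity of Lemma~\ref{lemma:inequalitiesk} to reduce to $F(0)$ and $G(1)$. The only cosmetic difference is in the final estimate for $G(1)$: the paper uses $\floor{\beta}\beta'\geq\beta\beta'$ to get $(\beta+b)b$, whereas you factor directly as $(\beta+b)(\beta+b-m)$; both give the needed bound.
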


\begin{proof}
Write $\beta'=-\beta-b$, for $b\geq 4$ an integer.
Let $x=\sum_{i=0}^{n}x_i\beta^i\in\Z_\beta^+$. Assume first that the highest power of $\beta$ appearing in $x$ is even. If this power is 0 then $x\in\{1,\dotsc,\floor{\beta}\}$, so $x'=1$ or $x'\geq 2$. Otherwise, write $n=2k$ with $k\geq 1$.
Then
\begin{equation*}
x'=\sum_{i=0}^{n}x_i\beta'^i\geq \beta'^{2k}+\floor{\beta}\sum_{i=0}^{k-1}\beta'^{2i+1}\geq \beta'^{2}+\floor{\beta}\beta' \geq \beta'(\beta'+\beta)=(\beta+b)b\geq 2,
\end{equation*}
where the second inequality follows from Lemma~\ref{lemma:inequalitiesk}.
Similarly, if the highest power appearing in $x$ is $n=2k+1$ odd, we have
\begin{equation*}
x'=\sum_{i=0}^{n}x_i\beta'^i\leq \beta'^{2k+1}+\floor{\beta}\sum_{i=0}^{k}\beta'^{2i}\leq \beta'+\floor{\beta}=\floor{\beta}-\beta-b<-b\leq-4.\qedhere
\end{equation*}
\end{proof}

\begin{proof}[Proof of Theorem \ref{t:negatconjugnotCFF}]
By the previous proposition, we have that the conjugate of any partial quotient is either equal to $1$, or is at least $2$, or at most $-4$. This allows us to track the position of the conjugates of the complete quotients, and to check that they never become zero. From the relation $\xi_{k+1}=1/(\xi_k-a_k)$, we can derive the following list of implications which is graphically represented in Figure~\ref{fig:automat}.

\begin{enumerate}
  \item if $\xi_k'\in(-1,0)$, then
  \begin{enumerate}
    \item $a_k'\geq1\implies \xi_{k+1}'\in(-1,0)$
    \item $a_k'\leq -3\implies \xi_{k+1}'\in(0,\tfrac12)$
  \end{enumerate}
  \item if $\xi_k'\in(0,\tfrac12)$, then
    \begin{enumerate}
	\item $a_k'\geq \tfrac32\implies \xi_{k+1}'\in(-1,0)$
        \item $a_k'\leq-2\implies\xi_{k+1}'\in(0,\tfrac12)$
        \item $a_k=1\implies\xi_{k+1}'\in(-2,-1)$
    \end{enumerate}
  \item if $\xi_k'\in(-2,-1)$, then
     \begin{enumerate}
	\item $a_k'>0\implies\xi_{k+1}'\in(-1,0)$
	\item $a_k'\leq-4\implies\xi_{k+1}'\in(0,\tfrac12)$
     \end{enumerate}
\end{enumerate}

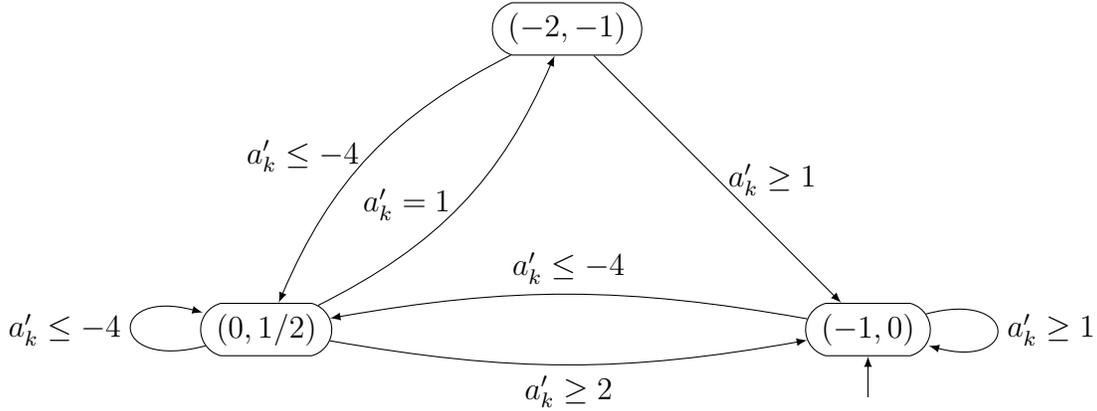
\begin{figure}[ht]
\centering
	\begin{tikzpicture}[auto, initial text=, >=latex]
	\node[draw, rounded rectangle, rounded rectangle arc length=150] (v0) at (-4, 0) {$(0,1/2)$};
	\node[draw, rounded rectangle, rounded rectangle arc length=150] (v1) at (0, 4) {$(-2,-1)$};
	\node[draw, rounded rectangle, rounded rectangle arc length=150    , initial,initial where=below] (v2) at (4, 0) {$(-1,0)$};
    \path[->] (v0) edge[loop left] node {$a'_k\leq -4$} ();
	\path[->] (v0) edge[bend right=20] node[left] {$a'_k=1$} (v1);
	\path[->] (v0) edge[bend right=10] node[below] {$a'_k\geq 2$} (v2);
	\path[->] (v1) edge[bend right=20] node[left] {$a'_k\leq -4$} (v0);
	\path[->] (v1) edge node[right] {$a'_k\geq 1$} (v2);
	\path[->] (v2) edge[bend right=10] node[above] {$a'_k\leq -4$} (v0);
   	\path[->] (v2) edge[loop right] node {$a'_k\geq 1$} ();
	\end{tikzpicture}
	\caption{Labeled graph representing the dynamics of the complete quotients in the $\beta$-continued fraction of $\xi$.}
	\label{fig:automat}
\end{figure}
\end{proof}

Theorem~\ref{t:negatconjugnotCFF} covers the cases when $\beta$ is a root of~\eqref{eq:betanegatconjug} with $b\geq 4$, but for $b=1,2,3$ we have to disprove property \CFF{} by other means. Mercat's conjecture implies that if $\beta>3$ \CFF{} cannot hold. This leaves only 30 values of $\beta$: the positive roots of polynomials
$$
X^2+bX-c,\quad b=1,2,3,\quad b+2\leq c < 3(b+3),\ c\neq 2(b+2).
$$
For 10 of these values we have found elements of the corresponding quadratic field $\Q(\beta)$ with periodic $\beta$-continued fraction expansion, thus refuting $\CFF$ directly.
However, there remain 20 cases for which the question of property $\CFF$ remains open.
The results supported by of our computational experiments are summarized in Table~\ref{t2}.
\begin{table}[h!]
$$
\begin{array}{|c|c|c|c|}\hline
\multicolumn{2}{|c|}{} & \CFF & \text{Reason} \\\hline
b=1 & c\geq 12  & \text{No} & \text{Implied by Mercat's conjecture} \\
b=1 & 5\leq c\leq 11,\,c\neq6  &  \text{No}   & \text{We found a periodic expansion}  \\
b=1 & c=3,4  &  \text{Open}  & \text{}   \\
b=2 & c\geq 16 & \text{No}  & \text{Implied by Mercat's conjecture}\\
b=2 & 12\leq c\leq 14  &  \text{No}  & \text{We found a periodic expansion}\\
b=2 & 4\leq c\leq 11,\,c\neq 8  &  \text{Open}   & \text{} \\
b=3 & c\geq 18 & \text{No} & \text{Implied by Mercat's conjecture}\\
b=3 & c=17  &  \text{No}   & \text{We found a periodic expansion}\\
b=3 & 5\leq c\leq 16,\,c\neq 10  &  \text{Open}   & \text{} \\
b\geq4 & c\geq b+2 & \text{No} & \text{Theorem~\ref{t:negatconjugnotCFF}} \\\hline
\end{array}
$$
  \caption{Expected behaviour of the $\beta$-continued fractions for non-Perron quadratic numbers with negative conjugate in terms
  of the parameters of its minimal polynomial $x^2+bx-c$.}\label{t2}
\end{table}

Let us comment on what our computational experiments suggest on eventually periodic $\beta$-continued fraction expansions of elements of $\Q(\beta)$. Given $b\geq 1$, for sufficiently large $c$ we expect that property $\CFP$ holds. For intermediate values of $c$, we can find in $\Q(\beta)$ both elements with eventually periodic and aperiodic $\beta$-continued fraction expansion. For the smallest admissible values of $c\geq b+2$, it is likely that all elements of $\Q(\beta)$ have either finite or aperiodic $\beta$-continued fraction expansion.
The exact behaviour is yet to be investigated.


\appendix
\section{}
Let us present here an alternative proof of the finiteness result in Theorem~\ref{t:superteorem} which provides a better bound for the \emph{number} of irrational partial quotients.

In the entire appendix, let $\xi=[a_0,a_1,\dotsc]$ be an infinite continued fraction with the value in a quadratic field $K$, $a_n\in\O_K$ and $a_n\geq 1$ for all $n$. Let $d$ be the leading coefficient of the minimal polynomial of $\xi$ over $\Z$ (which we take to be positive). We will always denote by $x'$ the Galois conjugate of $x\in K$.

\begin{thm}\label{thm:appendix}
 Assume that $\abs{a_n'}<a_n$ for all $n$ such that $a_n\not\in\Z$. Then there are at most
 \[
  36H(\xi)^2\log H(\xi)+9\log(3) H(\xi)^2
 \]
irrational partial quotients, and therefore the given continued fraction is eventually periodic.
\end{thm}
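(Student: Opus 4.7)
My plan is to refine the argument of Theorem~\ref{t:superteorem} by tracking denominators of the complete quotients quantitatively. Starting from the bound $H(\xi_n)\leq\sqrt{3}H(\xi)$ given by Lemma~\ref{l:completequobound} and the Mahler measure identity of Remark~\ref{rem:Mahlermeasure}, I would observe that the leading coefficient $d_n$ of the minimal polynomial of $\xi_n$ over $\Z$ satisfies $d_n\leq H(\xi_n)^2\leq 3H(\xi)^2$, so every $d_n$ lies in the finite set $\{1,2,\dots,\lfloor 3H(\xi)^2\rfloor\}$. Writing $\xi_n=\alpha_n/d_n$ with $\alpha_n\in\mathcal{O}_K$, the recursion $\xi_{n+1}=1/(\xi_n-a_n)$ together with $a_n\in\mathcal{O}_K$ gives $\xi_{n+1}=d_n/\gamma_n$ with $\gamma_n=\alpha_n-d_n a_n\in\mathcal{O}_K$, so that $d_{n+1}$ is controlled by the factorisation of the ideal $(\gamma_n)$ relative to $(d_n)$ in $\mathcal{O}_K$.

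The central step is to extract quantitative progress at each irrational partial quotient. For $a_n\in\mathcal{O}_K\setminus\Z$ with $\abs{a_n'}<a_n$ (strictly, in contrast to the main theorem), the discriminant $(a_n-a_n')^2$ is a positive integer and thus $a_n-a_n'\geq 1$, producing a definite asymmetry between $\xi_n-a_n$ and $\xi_n'-a_n'$. I would try to turn this asymmetry into a strict multiplicative drop of a suitable potential attached to $\xi_n$, for instance $\log M(\xi_n)$ or a variant built from $d_n$ and $\abs{\xi_n'}$. Since the potential ranges over an interval of size $O(\log H(\xi))$ and each irrational step should consume at least $\Omega(1/H(\xi)^2)$ of it, the count of irrational partial quotients is $O(H(\xi)^2\log H(\xi))$; the constants $9$ and $36$ in the final bound should emerge from careful bookkeeping with the factor $\sqrt{3}$ in the height bound and the explicit upper bound $3H(\xi)^2$ on the $d_n$.

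Once the number of irrational partial quotients is finite, the tail of the continued fraction has only positive integer partial quotients and still converges to an element of the quadratic field $K$, so Lagrange's classical theorem (or Theorem~\ref{t:superteorem} applied to that tail) delivers eventual periodicity. The main obstacle will be pinning down the correct potential function and rigorously certifying the per-step decrease: the strict inequality $\abs{a_n'}<a_n$ should be precisely what converts mere boundedness of the potential into strict quantitative progress at each irrational step, but establishing the quantitative lower bound on the drop, and hence the precise constants in the final estimate, is where the real technical work lies.
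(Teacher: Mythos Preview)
Your plan isolates the right numerical ingredients---the gap $a_n-\abs{a_n'}\geq 1$ for irrational $a_n\in\O_K$ and the ceiling $a_n\leq 3H(\xi)^2$ coming from Lemma~\ref{l:completequobound}---but the mechanism you propose, a potential attached to $\xi_n$ such as $\log M(\xi_n)$ or $d_n$, cannot be made to work. Such a function is \emph{not} monotone along rational steps: for instance, if $\xi$ is the positive root of $X^2+3X-5$ then $\xi\approx 1.193$, $a_0=1\in\Z$, and one computes $M(\xi)=5$ while $M(\xi_1)=M(\xi-1)=\tfrac{5+\sqrt{29}}{2}>5$; the denominator $d_n$ likewise oscillates already for the ordinary continued fraction of $\sqrt{3}$, where $d_0=1$, $d_1=2$. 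Since rational steps can and do intersperse the irrational ones, any drop you engineer at irrational steps may be undone in between, and the telescoping count collapses. More conceptually, any function of $\xi_n$ alone must be periodic on the eventual periodic tail, so it cannot be strictly monotone there, and there is no mechanism preventing the same oscillation earlier.

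The paper's proof tracks instead a quantity that depends on the \emph{whole history}, namely the ratios $\abs{p_n'}/p_n$ and $\abs{q_n'}/q_n$. A recursive comparison argument (Proposition~\ref{t:lim}, built on Lemma~\ref{lem:posloupnosti}) shows that these ratios are dominated by a product that contracts by a factor $\tfrac{C+2}{3}=1-\tfrac{1}{9H(\xi)^2}$ at every irrational index and by $1$ at rational indices---so the required monotonicity is built into the construction rather than hoped for. The lower barrier is supplied not by nonnegativity of a height but by a Liouville-type inequality (Proposition~\ref{prop:DiophApprox}): since $d(q_n\xi-p_n)\in\O_K\setminus\{0\}$, its norm has absolute value at least $1$, giving $\abs{q_n'/q_n}\cdot\abs{\xi'-p_n'/q_n'}>1/d^2$. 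Combining the exponential upper bound with this fixed floor, using $d^{2}(1+\abs{\xi}+\abs{\xi'})\leq 3H(\xi)^4$ and $-\log\!\big(1-\tfrac{1}{9H(\xi)^2}\big)\geq \tfrac{1}{9H(\xi)^2}$, and taking logarithms yields exactly the constants $36$ and $9\log 3$ in the statement.
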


\medskip

Before plunging into the proof, let us briefly recall why all rational numbers have a finite regular continued fraction expansion.

On one hand, continued fractions give very good rational approximations, and if $p/q$ is a convergent of a rational number $a/b$ then the majoration $\big|\frac{a}{b}-\frac{p}{q}\big|\leq \frac{1}{q^2}$ holds.
On the other hand, rational numbers are badly approximated by other rational numbers and if $\frac{a}{b}\neq\frac{p}{q}$ then a minoration $\big|\frac{a}{b}-\frac{p}{q}\big|\geq \frac{\abs{aq-bp}}{bq}\geq\frac{1}{bq}$ holds. The two inequalities taken together imply that for a fixed $a/b$ only finitely many distinct convergents $p/q$ may exist.

The same majoration also holds for any convergent continued fractions, as seen with \eqref{eq:odhad}, so in order to repeat the classical proof we need an argument of diophantine approximation to supply a minoration; this is given by the following proposition.
\begin{prop}\label{prop:DiophApprox}
 For all $n\geq 0$ the following inequality holds:
 \[
  \abs{\frac{q_n'}{q_n}}\abs{\xi'-\frac{p_n'}{q_n'}}> \frac{a_n}{d^2}.
 \]
\end{prop}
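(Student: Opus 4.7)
The plan is to combine the algebraic integrality of the norm $N_{K/\Q}(dA_n)$, where $A_n := \xi q_n - p_n$, with the exact formula for $|A_n|$ coming from~\eqref{eq:rozdil}. First, since $\xi$ has minimal polynomial over $\Z$ with leading coefficient $d$, the element $d\xi$ lies in $\O_K$; hence $dA_n = (d\xi)q_n - dp_n \in \O_K$, and its norm $d^2 A_n A_n' = N_{K/\Q}(dA_n)$ is a rational integer. Because $\xi$ is irrational, $A_n\neq 0$ and this integer is nonzero, giving $|A_n A_n'|\geq 1/d^2$.

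Second, equation~\eqref{eq:rozdil} supplies the exact identity $|A_n| = 1/(\xi_{n+1}q_n + q_{n-1})$. Combining the two yields $|A_n'|\geq (\xi_{n+1}q_n + q_{n-1})/d^2$. Observing that the left-hand side of the proposition equals $|A_n'|/q_n$, I would divide through by $q_n$ and use $\xi_{n+1}\geq a_{n+1}$ (the complete quotient being no smaller than its integer part) to get
\[
\left|\frac{q_n'}{q_n}\right|\left|\xi' - \frac{p_n'}{q_n'}\right| \;\geq\; \frac{\xi_{n+1}}{d^2} + \frac{q_{n-1}}{d^2\,q_n} \;>\; \frac{a_{n+1}}{d^2}.
\]

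The main obstacle is that this natural argument produces $a_{n+1}/d^2$ rather than $a_n/d^2$. Upgrading the bound would require $\xi_{n+1}+q_{n-1}/q_n\geq a_n$, but $q_{n-1}/q_n<1/a_n$ and $\xi_{n+1}$ can be only marginally larger than $a_{n+1}$, so the inequality fails whenever $a_n$ is much larger than $a_{n+1}$. Concretely, for $\xi=102-\sqrt{2}\in\Q(\sqrt{2})$ (which has $d=1$, $a_0=100$, $a_1=1$) the left-hand side at $n=0$ equals $2+\sqrt{2}\approx 3.414$, strictly less than $a_0/d^2=100$, so the inequality is not correct at $n=0$ as stated. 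The reindexed version
\[
\left|\frac{q_{n-1}'}{q_{n-1}}\right|\left|\xi' - \frac{p_{n-1}'}{q_{n-1}'}\right| \;>\; \frac{a_n}{d^2}\qquad(n\geq 1),
\]
on the other hand, follows from exactly the computation above applied with $n-1$ in place of $n$, since $\xi_n\geq a_n$ strictly for an infinite continued fraction; this reindexed form is what the downstream counting of irrational partial quotients in Theorem~\ref{thm:appendix} actually needs, and I would pursue it in place of the statement as written.
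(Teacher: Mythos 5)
Your argument is essentially the paper's own: both rest on the observation that $d(q_n\xi-p_n)$ is a nonzero algebraic integer, so that $d^2(q_n\xi-p_n)(q_n'\xi'-p_n')$ is a nonzero rational integer of absolute value at least $1$, combined with the approximation identity \eqref{eq:rozdil} (equivalently the estimate \eqref{eq:odhad}) for $\abs{q_n\xi-p_n}$. The substantive content of your proposal is that you have correctly diagnosed an off-by-one error in the statement. Since $\abs{q_n\xi-p_n}=1/(\xi_{n+1}q_n+q_{n-1})<1/(a_{n+1}q_n)$, the norm computation yields the lower bound $a_{n+1}/d^2$, not $a_n/d^2$; the paper's final display cites \eqref{eq:odhad} but writes $a_n$ where only $a_{n+1}$ follows. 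Your example $\xi=102-\sqrt2$ (with $d=1$, $a_0=100$, left-hand side $2+\sqrt2$ at $n=0$) shows the inequality as printed is genuinely false, not merely unproven. The corrected statement, whether written with $a_{n+1}$ on the right or reindexed as you propose, is what the argument establishes, and your derivation of it is sound.

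One small caveat on your last sentence: the proof of Theorem~\ref{thm:appendix} invokes the proposition only through the weaker consequence $\abs{q_n'/q_n}\cdot\abs{\xi'-p_n'/q_n'}>1/d^2$, which follows from $a_{n+1}\geq 1$ exactly as it would from $a_n\geq 1$; so the error is harmless downstream and no reindexing of the application is actually needed, only of the proposition itself.
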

\begin{proof}
 The quantity $q_n\xi-p_n$ is not equal to zero because  otherwise the expansion of $\xi$ would be finite.
 As $d$ is the leading coefficient of the minimal polynomial of $\xi$ over $\Z$, we see that $d \xi\in\O_K$.
 Then the norm of $d(q_n\xi-p_n)$ must be at least one in absolute value, because $p_n$ and $q_n$ are algebraic integers.
 So we have that
 \begin{equation*}
  1\leq \abs{N(d(q_n\xi-p_n))}=d^2\abs{q_n\xi-p_n}\cdot\abs{q_n'\xi'-p_n'}<\frac{d^2}{a_n} \abs{\frac{q_n'}{q_n}}\cdot \abs{\xi'-\frac{p_n'}{q_n'}},
 \end{equation*}
where the last inequality follows from \eqref{eq:odhad}.
\end{proof}
As the euclidean absolute value over $\Q$ splits into two archimedean absolute values over the quadratic number field $K$, our minoration involves both the convergents $p/q$ and their conjugates $p'/q'$. The conjugates $p'$ and $q'$ obey the same recurrence relations as $p$ and $q$, with the partial quotients replaced by their conjugates. These conjugates however need not be bounded away from 0, or even be positive, and we cannot guarantee in general that the ratio $p'/q'$ will converge to some value. In order to proceed with the proof we need a way of controlling their growth, which we will do through an accurate analysis of the recurrences~\eqref{eq:pnqn}.

\smallskip

We further state here two easily-checked remarks, which we will use in the proof below:
\begin{remark}\label{cl:1}
  Let $0<A<B$ be real numbers. The function $f(x)=\frac{A+x}{B+x}$ is strictly increasing on $(0,+\infty)$.
\end{remark}

\begin{remark}\label{cl:2}
  Let $(\alpha_n)_{n\geq 0}$ satisfy a linear recurrence of order two. Then for every $n\geq 0$, $\alpha_n=f_n\alpha_1+g_n\alpha_0$, where $(f_n)_{n\geq 0}$, $(g_n)_{n\geq 0}$ satisfy the same linear recurrence with initial conditions $f_0=g_1=0$, $f_1=g_0=1$.
\end{remark}

\begin{lem}\label{lem:posloupnosti}
  Let $(\alpha_n)_{n\geq 0}$, $(\beta_n)_{n\geq 0}$ be two real sequences satisfying the same linear recurrence relation of order two
  \begin{equation}\label{eq:spolrek}
  x_n = r_n x_{n-1} + x_{n-2},\qquad n\geq 2,
  \end{equation}
with $r_i\geq 1$, for $i\geq 1$, with initial conditions $\alpha_{1}>\beta_{1}>0$, $\alpha_{0}=\beta_0>0$.
Then for all $n\geq 3$ we have
$$
\frac{\beta_n}{\alpha_n} < \frac{\beta_{1}+\beta_{0}}{\alpha_{1}+\alpha_{0}}\,.
$$
\end{lem}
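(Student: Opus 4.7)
The plan is to combine the two preceding remarks to reduce the inequality to the positivity of an auxiliary sequence. First I would apply Remark~\ref{cl:2} to express both sequences in terms of fundamental solutions: write
\[
\alpha_n = f_n\alpha_1 + g_n\alpha_0, \qquad \beta_n = f_n\beta_1 + g_n\beta_0 = f_n\beta_1 + g_n\alpha_0,
\]
where the last equality uses $\beta_0=\alpha_0$ and where $(f_n),(g_n)$ obey the same recurrence with $f_0=g_1=0$ and $f_1=g_0=1$. A short induction using $r_n\geq 1$ shows that $f_n>0$ for all $n\geq 1$, so for those indices we may divide numerator and denominator of $\beta_n/\alpha_n$ by $f_n$ to obtain
\[
\frac{\beta_n}{\alpha_n} \;=\; \frac{\beta_1+(g_n/f_n)\alpha_0}{\alpha_1+(g_n/f_n)\alpha_0}.
\]

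Next, I would observe that the right-hand side of the target inequality, namely $\frac{\beta_1+\beta_0}{\alpha_1+\alpha_0}=\frac{\beta_1+\alpha_0}{\alpha_1+\alpha_0}$, is exactly the same fractional expression evaluated at the value $g_n/f_n = 1$. Since $0<\beta_1<\alpha_1$, Remark~\ref{cl:1} (with $A=\beta_1$ and $B=\alpha_1$) shows that the map $y\mapsto\frac{\beta_1+y}{\alpha_1+y}$ is strictly increasing on $(0,+\infty)$. Because $\alpha_0>0$, the inequality $\frac{\beta_n}{\alpha_n}<\frac{\beta_1+\beta_0}{\alpha_1+\alpha_0}$ is therefore equivalent to $g_n<f_n$.

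It remains to verify that $g_n<f_n$ for every $n\geq 3$. Setting $h_n := f_n-g_n$, this auxiliary sequence satisfies the same recurrence $h_n = r_n h_{n-1}+h_{n-2}$, now with initial data $h_0=-1$ and $h_1=1$. A direct computation gives $h_2=r_2-1\geq 0$ and $h_3 = r_3(r_2-1)+1\geq 1$, and an easy induction (using $r_n\geq 1$ together with $h_{n-1}\geq 1$ and $h_{n-2}\geq 0$) yields $h_n\geq 1$ for all $n\geq 3$.

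The only subtle point, and the reason the statement must start at $n=3$ rather than $n=2$, is that $h_2$ can vanish: this happens precisely when $r_2=1$, in which case $\beta_2/\alpha_2$ is literally the mediant $(\beta_1+\beta_0)/(\alpha_1+\alpha_0)$ and the strict inequality fails. Starting the induction one step later sidesteps this degeneracy, after which the positivity propagates forward mechanically through the recurrence. No real obstacle arises; the argument is purely combinatorial-algebraic once Remarks~\ref{cl:1} and~\ref{cl:2} have been lined up.
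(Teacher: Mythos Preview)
Your proof is correct and follows essentially the same approach as the paper: decompose via Remark~\ref{cl:2} into the fundamental solutions $f_n,g_n$, reduce the desired inequality via Remark~\ref{cl:1} to the claim $g_n<f_n$ for $n\geq 3$, and verify that claim by induction. Your presentation differs only cosmetically (you divide through by $f_n$ before invoking monotonicity, and you give a bit more detail on the induction for $h_n=f_n-g_n$, including the nice remark on why $n=2$ is excluded).
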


\pfz
Let $(f_n)_{n\geq 0}$, $(g_n)_{n\geq 0}$ be the sequences from Remark~\ref{cl:2} such that $\alpha_n=f_n\alpha_1+g_n\alpha_0$, $\beta_n=f_n\beta_1+g_n\beta_0=f_n\beta_1+g_n\alpha_0$.
Since $(f_n)_{n\geq 0}$, $(g_n)_{n\geq 0}$ satisfy~\eqref{eq:spolrek}, they are both positive sequences for $n\geq 2$. By induction,  with the use of $r_i\geq 1$, one can show that  $g_n<f_n$ for $n\geq 3$.
Using Remark~\ref{cl:1}, we then have
$$
\frac{\beta_n}{\alpha_n}=\frac{f_n\beta_1+g_n\beta_0}{f_n\alpha_1+g_n\alpha_0} < \frac{f_n\beta_1+f_n\beta_0}{f_n\alpha_1+f_n\alpha_0} = \frac{\beta_1+\beta_0}{\alpha_1+\alpha_0}\,,
$$
where we have used that $\alpha_0=\beta_0$.
\pfk

We now prove a proposition which allows to compare recurrent sequences of the shape \eqref{eq:pnqn}. We will apply it to the sequences $(p_n)_n,(q_n)_n$ and their conjugates.

\begin{prop}\label{t:lim}
  Let $(a_n)_{n\geq 0}$ be a sequence of positive reals and $(b_n)_{n\geq 0}$ be a sequence of complex numbers such that $a_n\geq 1$ and $\abs{b_n}\leq a_n$ for every $n\geq 0$.

  Let $(s_n)_{n}$, $(t_n)_{n}$ satisfy for every $n\geq 0$
$$
\begin{aligned}
s_n & = a_n s_{n-1} + s_{n-2},&\quad& s_{-1}=t_{-1}>0 & \\
t_n & = b_n t_{n-1} + t_{n-2},&\quad & s_{-2} = t_{-2} = 0. &
\end{aligned}
$$
Let $0<C<1$ and $r_n=\#\{0\leq k \leq n \mid \abs{b_k}<C a_k\}$. Then for all $n\geq 4$ we have
\begin{equation*}
 \frac{\abs{t_n}}{s_n}\leq \left(\frac{C+2}{3}\right)^{r_{n-3}}.
\end{equation*}

In particular, if $\frac{|b_k|}{a_k}<C$ for infinitely many $k\in\N$, then $$\lim_{n\to+\infty}\frac{t_n}{s_n}=0.$$
\end{prop}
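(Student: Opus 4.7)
The approach is to bound $|t_n|$ by an auxiliary real nonnegative sequence and then compare the latter with $s_n$ via Lemma~\ref{lem:posloupnosti}, one slowdown at a time.

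First I would set $\tilde t_{-2}=0$, $\tilde t_{-1}=t_{-1}$ and, for $n\geq 0$, $\tilde t_n=\tilde b_n\tilde t_{n-1}+\tilde t_{n-2}$, where $\tilde b_n=a_n$ whenever $|b_n|\geq Ca_n$ and $\tilde b_n=Ca_n$ otherwise. In either case $|b_n|\leq \tilde b_n$, and a routine induction using the triangle inequality shows $|t_n|\leq \tilde t_n$ for all $n$. It is therefore enough to prove $\tilde t_n/s_n\leq R^{r_{n-3}}$ for $n\geq 4$, where $R=(C+2)/3\in(0,1)$.

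The main step is to introduce the intermediate sequences $\tilde t^{(j)}$ defined exactly as $s_n$ except that the coefficient $Ca_{k_i}$ is used at the first $j$ slowdowns $k_1<\dotsb<k_j$, where $k_1,k_2,\dotsc$ enumerate all indices $k$ with $|b_k|<Ca_k$. So $\tilde t^{(0)}=s$ and $\tilde t^{(r)}=\tilde t$ for $r$ sufficiently large. The pair $\tilde t^{(j)}$ and $\tilde t^{(j-1)}$ coincides up to index $k_j-1$ and satisfies the common recurrence $x_n=a_nx_{n-1}+x_{n-2}$ for $n>k_j$, so Lemma~\ref{lem:posloupnosti}, applied with the initial conditions placed at $k_j-1, k_j$, yields for all $n\geq k_j+2$
\[
\frac{\tilde t^{(j)}_n}{\tilde t^{(j-1)}_n}\leq \frac{\tilde t^{(j)}_{k_j}+\tilde t^{(j)}_{k_j-1}}{\tilde t^{(j-1)}_{k_j}+\tilde t^{(j-1)}_{k_j-1}}=\frac{Ca_{k_j}u+(u+v)}{a_{k_j}u+(u+v)},
\]
with $u=\tilde t^{(j-1)}_{k_j-1}$ and $v=\tilde t^{(j-1)}_{k_j-2}$.

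The key algebraic observation is $(CA+B)/(A+B)\leq R$ whenever $A,B>0$ and $2A\geq B$; this follows from cross-multiplication using $C<1$. Taking $A=a_{k_j}u$ and $B=u+v$ and using $a_{k_j}\geq 1$, the inequality reduces to the monotonicity $u\geq v$, i.e.\ $\tilde t^{(j-1)}_{k_j-1}\geq\tilde t^{(j-1)}_{k_j-2}$, which holds automatically provided the index $k_j-1$ is not itself a slowdown. The main obstacle I anticipate is handling maximal runs of consecutive slowdowns, where the intermediate sequence $\tilde t^{(j-1)}$ can decrease and the per-step bound $R$ may fail. This can be resolved by treating each maximal block of $\ell$ adjacent slowdowns as a unit: one applies Lemma~\ref{lem:posloupnosti} once after the entire block and verifies by direct computation, analogous to the one-slowdown case, that the combined ratio is bounded by $R^{\ell}$. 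Telescoping these per-block bounds produces $\tilde t_n/s_n\leq R^{r_{n-3}}$ for all $n\geq 4$, and the last assertion follows immediately: if $|b_k|/a_k<C$ for infinitely many $k$, then $r_{n-3}\to\infty$, so $R^{r_{n-3}}\to 0$ and hence $|t_n|/s_n\to 0$.
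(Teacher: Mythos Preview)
Your overall strategy—bound $|t_n|$ by a nonnegative auxiliary sequence and then interpolate between it and $s_n$ through a telescoping family of intermediates, comparing consecutive members by Lemma~\ref{lem:posloupnosti}—is exactly the paper's. The difference is in how the intermediates are chosen, and your choice leaves a real gap.

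Your $\tilde t^{(j)}$ replaces $a_{k_i}$ by $Ca_{k_i}$ only at the first $j$ slowdown indices. When $k_j-1=k_{j-1}$ is itself a slowdown, the coefficient of $\tilde t^{(j-1)}$ at index $k_j-1$ is $Ca_{k_{j-1}}$, possibly less than $1$, and the monotonicity $u\ge v$ you need can fail, as you correctly observe. Your proposed cure—process a maximal run of $\ell$ consecutive slowdowns at once and invoke Lemma~\ref{lem:posloupnosti} ``after the entire block''—cannot be carried out as written. If the block occupies indices $m,\dots,m+\ell-1$, then $\tilde t^{(j-\ell)}$ and $\tilde t^{(j)}$ last agree at index $m-1$; from $m$ through $m+\ell-1$ they satisfy \emph{different} recurrences, and they share the common recurrence with coefficients $a_n$ only from index $m+\ell$ on, where they no longer agree. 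Thus the hypothesis $\alpha_0=\beta_0$ of Lemma~\ref{lem:posloupnosti} is unavailable at the point where a common recurrence begins, and you offer no computation showing that the resulting ratio is nevertheless bounded by $R^{\ell}$; the one-slowdown calculation does not extend in any obvious way.

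The paper sidesteps this by a different interpolation: it sets $\gamma^{(k)}$ to use $|b_n|$ for \emph{every} $n<k$ and $a_n$ for $n\ge k$, and telescopes over all indices rather than only over the slowdowns. Then $\gamma^{(k)}$ and $\gamma^{(k+1)}$ always coincide up to index $k-1$ and obey the same recurrence (with coefficients $a_n\ge 1$) from index $k+1$ on, so Lemma~\ref{lem:posloupnosti} applies uniformly at each step; the ratio is trivially $\le 1$ at non-slowdowns and is shown to be $<(C+2)/3$ at slowdowns. That uniform applicability of the lemma is exactly what your construction lacks, and it is the key idea you are missing.
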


\begin{proof}
Let us define for each $k\in\N$ an auxiliary sequence $(\gamma^{(k)})_{n}$ as follows,
$$
\begin{aligned}
\gamma^{(k)}_{-2}&=0, & \gamma^{(k)}_{-1}=s_{-1},\\
\gamma^{(k)}_n &=|b_n| \gamma^{(k)}_{n-1} + \gamma^{(k)}_{n-2}, &\text{ for $0\leq n< k$},\\
\gamma^{(k)}_n &=a_n \gamma^{(k)}_{n-1} + \gamma^{(k)}_{n-2},  &\text{ for $n\geq k$}.
\end{aligned}
$$
Clearly each sequence $(\gamma^{(k)})_{n\in\N}$ is strictly increasing for $n\geq 0$ and  we also have
\begin{equation}\label{eq:spoustuposloupnosti}
|t_n|=\gamma^{(n+1)}_{n}\leq \cdots \leq\gamma^{(k+1)}_n \leq \gamma^{(k)}_n \leq \cdots \leq\gamma^{(0)}_n =s_n\quad\text{ for every $n\in\N$}.
\end{equation}

Moreover, we have $\gamma^{(k+1)}_n = \gamma^{(k)}_n$ for $n=0,\dots,k-1$, and if $|b_k|<a_k$, then
$\gamma^{(k+1)}_k < \gamma^{(k)}_k$.

Now consider a fixed index $k\geq 2$ such that $\frac{|b_k|}{a_k}<C$.
We will show that there exists a constant $K<1$ such that
\begin{equation}\label{eq:nekonecnemensiK}
\frac{\gamma_n^{(k+1)}}{\gamma_n^{(k)}}<K \qquad \text{for every } n\geq k+2.
\end{equation}
We will apply Lemma~\ref{lem:posloupnosti} with $\alpha_n=\gamma_{k+n-1}^{(k)}$, $\beta_n=\gamma_{k+n-1}^{(k+1)}$ for $n\geq 0$.
These sequences satisfy the same recurrence with $r_n=a_{k+n-1}$, and the assumptions $\alpha_{1}>\beta_{1}>0$, $\alpha_{0}=\beta_0>0$
on initial conditions are satisfied.

For all $n\geq k+2$ we obtain
$$
\begin{aligned}
\frac{\gamma_n^{(k+1)}}{\gamma_n^{(k)}}&=\frac{\beta_{n-k+1}}{\alpha_{n-k+1}}< \frac{\beta_1+\beta_0}{\alpha_1+\alpha_0}=
\frac{\gamma_{k}^{(k+1)}+\gamma_{k-1}^{(k+1)}}{\gamma_{k}^{(k)}+\gamma_{k-1}^{(k)}} =
\frac{|b_k|\gamma_{k-1}^{(k+1)}+\gamma_{k-2}^{(k+1)}+ \gamma_{k-1}^{(k+1)}}{a_k\gamma_{k-1}^{(k)}+\gamma_{k-2}^{(k)}+\gamma_{k-1}^{(k)}}=\\
&=\frac{(|b_k|+1)\gamma_{k-1}^{(k+1)}+\gamma_{k-2}^{(k+1)}}{(a_k+1)\gamma_{k-1}^{(k)}+\gamma_{k-2}^{(k)}}=
\frac{|b_k|+1+\frac{\gamma_{k-2}^{(k+1)}}{\gamma_{k-1}^{(k+1)}}}{a_k+1+\frac{\gamma_{k-2}^{(k)}}{\gamma_{k-1}^{(k)}}},
\end{aligned}
$$
where we have used that $\gamma^{(k+1)}_{k-1} = \gamma^{(k)}_{k-1}$. Since the sequences $(\gamma_{n}^{(j)})$ are strictly increasing, we derive by Remark~\ref{cl:1} that
$$
\frac{\gamma_n^{(k+1)}}{\gamma_n^{(k)}}<\frac{|b_k|+2}{a_k+2}=\frac{\frac{|b_k|}{a_k}+\frac{2}{a_k}}{1+\frac{2}{a_k}}\leq \frac{\frac{|b_k|}{a_k}+2}{3}<\frac{C+2}{3}=:K\,,
$$
where we use again Remark~\ref{cl:1} with the fact that $a_k\geq 1$ and $k$ is such that $\frac{|b_k|}{a_k}<C$. Since $C<1$, also $K<1$, which shows~\eqref{eq:nekonecnemensiK} is true.

\smallskip
By~\eqref{eq:spoustuposloupnosti} and~\eqref{eq:nekonecnemensiK}, for any $k\geq 2$ and $n\geq k+2$, we have
$$
\frac{|t_n|}{s_n}\leq \frac{\gamma_n^{(k)}}{\gamma_n^{(0)}}=\prod_{j=0}^{k-1} \frac{\gamma_n^{(j+1)}}{\gamma_n^{(j)}} \leq
\prod_{j=0}^{k-1} K_j \,,
$$
where $K_j=K$ for $j$ such that $\frac{|b_j|}{a_j}<C$ and $K_j=1$ otherwise. Thus for every $n\geq 4$,
$$
0\leq  \frac{|t_n|}{s_n}\leq \prod_{j=0}^{n-3} K_j=K^{r_{n-3}}.
$$

If $r_n$ tends to infinity with $n$, we obtain $\lim_{n\to\infty} \frac{|t_n|}{s_n}=0$.
\end{proof}

We are now ready for the final part of the argument.

\begin{proof}[Proof of Theorem~\ref{thm:appendix}]
We first show that if $|a'_n|<a_n$ for every irrational partial quotient $a_n$, then there exists a constant $0<C<1$ such that for all $n\geq 0$ with $a_n\notin\Z$ we have $\frac{\abs{a_n'}}{a_n}\leq C$.

Since $|a'_n|\leq a_n$ for all $n\geq 0$, by Proposition~\ref{l:completequobound} the partial quotients satisfy $a_n\leq 3H(\xi)^2$. Since $a_n,a_n'$ are algebraic integers, the quantities $a_n+a_n'$ and $a_n-a_n'$ are in $\Z\cup \sqrt{D}\Z$, so that $a_n-\abs{a_n'}\geq 1$ whenever it is not equal to 0. Dividing by $a_n$ we see that
\[
 \frac{\abs{a_n'}}{a_n}\leq 1-\frac{1}{a_n}\leq 1-\frac{1}{3H(\xi)^2}=:C.
\]

We can now apply Proposition~\ref{t:lim} with $(a_n')_n$ as the sequence $(b_n)_n$ and $(p_n)_n, (p_n')_n$  as the sequences $(s_n)_n, (t_n)_n$. Analogously but shifting all indices by one, so that the initial conditions are verified, we can do it for $(q_{n+1})_n, (q_{n+1}')_n$.
Therefore for all $n\geq 5$  we have that
\begin{equation*}
 \frac{\abs{p_n'}}{p_n},\frac{\abs{q_n'}}{q_n}\leq \left(\frac{C+2}{3}\right)^{r_{n-4}},
\end{equation*}
where  $r_n=\#\{0\leq k \leq n \mid a_k\not\in\Z\}$.

This implies that
\[
 \abs{\frac{q_n'}{q_n}}\cdot\abs{\xi'-\frac{p_n'}{q_n'}}\leq \abs{\frac{q_n'}{q_n}}\abs{\xi'}+\abs{\frac{p_n'}{p_n}}\abs{\frac{p_n}{q_n}}\leq \left(\frac{C+2}{3}\right)^{r_{n-4}}(\abs{\xi'}+\abs{\xi}+1).
\]

On the other hand, by Proposition~\ref{prop:DiophApprox}
\[\abs{\frac{q_n'}{q_n}}\cdot\abs{\xi'-\frac{p_n'}{q_n'}}> \frac{1}{d^2},\]
so combining them we obtain
\[
 \left(\frac{C+2}{3}\right)^{-r_{n-4}}<d^{2} (\abs{\xi'}+\abs{\xi}+1)\leq 3 H(\xi)^4,
\]
where the second inequality comes from Remark~\ref{rem:Mahlermeasure} and
$1+a+b\leq 3 \sup(1,a)\sup(1,b)$ for all $a,b\geq 0$.

Taking logarithms we obtain
\[
 -r_{n-4}\log\left(1-\frac{1}{9H(\xi)^2}\right)<4\log H(\xi) +\log 3,
\]
and remembering that $\log(1+x)\leq x$ for all $x>-1$ we obtain
\[
 r_{n-4}<36H(\xi)^2\log H(\xi)+9\log(3) H(\xi)^2.
\]
Letting $n$ go to infinity, this shows that the number of irrational partial quotients is finite and does not exceed the stated bound.

As soon as the tail of the expansion contains only positive integers, the classical theory of simple continued fractions and the fact that $\xi$ lies in a quadratic field imply that the expansion is eventually periodic.
\end{proof}
Notice that, while this theorem gives a bound for the \emph{number} of irrational partial quotients, it does not tell us \emph{where} they are.
In comparison, Theorem~\ref{t:superteorem} shows that the irrational partial quotients can be effectively found.
Nevertheless we wanted to include this appendix because the proof given here is self-contained, rather elementary, and gives a bound which is substantially lower than those alluded to in Remark~\ref{rem:number.bounded.height}, which rely on much more sophisticated techniques.

\section*{Acknowledgements}
This work was supported by the project CZ.02.1.01/0.0/0.0/16\_019/0000778 of the Czech Technical University in Prague
and projects PRIMUS/20/SCI/002 and UNCE/SCI/022 from Charles University.
We thank the Centro di Ricerca Ma\-te\-ma\-ti\-ca Ennio De Giorgi for support.

The third author is a member of the GNSAGA research group of INdAM.

\bibliography{reference}
\bibliographystyle{amsalpha}
\end{document}